\documentclass[10pt]{amsart}

\usepackage[latin1]{inputenc}
\usepackage{amsmath}
\usepackage{amsfonts}
\usepackage{amssymb}
\usepackage[mathscr]{eucal}
\usepackage{diagrams}
\usepackage{xy,color,graphicx}
\usepackage{hyperref}
\xyoption{all}

\newcommand{\wis}[1]{{\text{\em \usefont{OT1}{cmtt}{m}{n} #1}}}

\newcommand{\C}{\mathbb{C}}
\newcommand{\Z}{\mathbb{Z}}
\newcommand{\vtx}[1]{*+[o][F-]{\scriptscriptstyle #1}}
\newcommand{\Oscr}{\mathcal{O}}

\newtheorem{theorem}{Theorem}
\newtheorem{lemma}{Lemma}

\title{Rationality and dense families of $B_3$ representations}
\author{Lieven Le Bruyn} 
\address{Department of Mathematics, University of Antwerp \\ 
 Middelheimlaan 1, B-2020 Antwerp (Belgium) \\ {\tt lieven.lebruyn@ua.ac.be}}

\begin{document}
\sloppy

\maketitle

\begin{abstract} Every irreducible component $\wis{iss}_{\beta}~\Gamma_0$ of semi-simple $n$-dimensional representations of the modular group $\Gamma_0 = PSL_2(\Z)$ has a Zariski dense subset contained in the image of an \'etale map
\[
\wis{iss}(Q,\alpha) \rTo \wis{iss}_{\beta}~\Gamma_0 \]
from the quotient variety $\wis{iss}(Q,\alpha)$ of representations of a fixed quiver $Q$ and a dimension vector $\alpha$ such that $\wis{iss}(Q,\alpha)$ is a rational variety. As an application we will prove that there is a unique component of 6-dimensional simple representations of the three string braid group $B_3$ detecting braid-reversion. Further, we give explicit rational parametrizations of dense families of simple $B_3$-representations of all dimensions $< 12$.
\end{abstract}

A knot is said to be invertible if it can be deformed continuously to itself, but with the orientation reversed. There exist non-invertible knots, the unique one with a minimal number of crossings is knot $8_{17}$  which is the closure of the three string braid $b=\sigma_1^{-2} \sigma_2 \sigma_1^{-1} \sigma_2 \sigma_1^{-1} \sigma_2^2$ 
\[
8_{17}~\includegraphics[height=70pt]{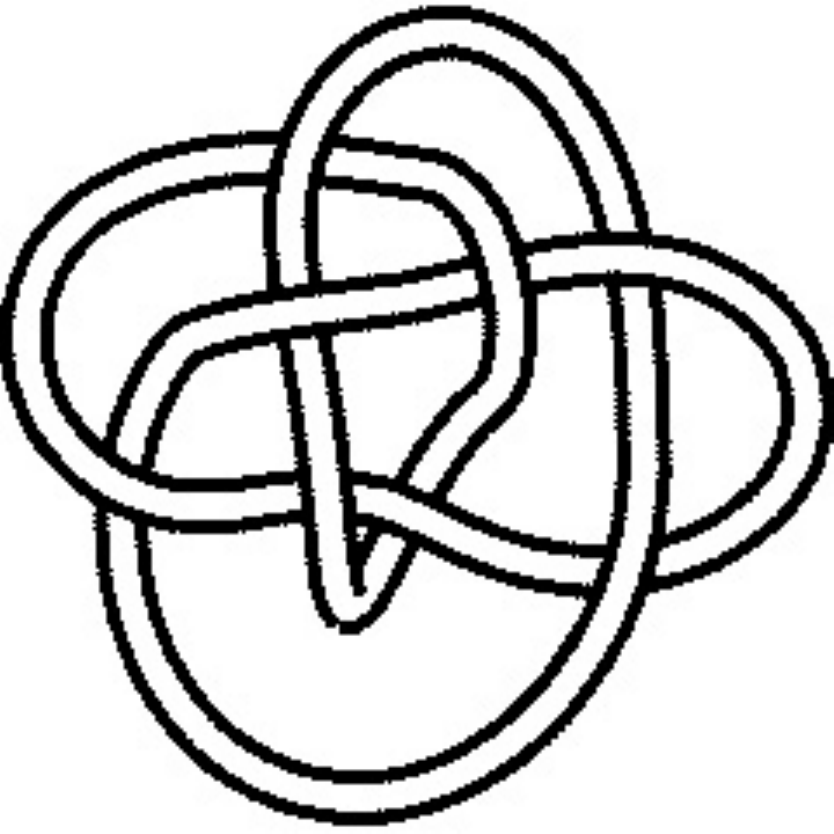}~\qquad~\qquad~\qquad~b~\includegraphics{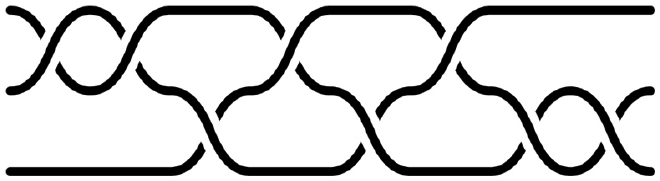}
\]
Proving knot-invertibility of $8_{17}$ essentially comes down to separating the conjugacy class of the braid $b$ from that of its reversed braid $b' = \sigma_2^2 \sigma_1^{-1} \sigma_2 \sigma_1^{-1} \sigma_2 \sigma_1^{-2}$. Recall that knot invariants derived from quantum groups cannot detect non-invertibility, see \cite{Kuperberg}. On the other hand, $Tr_V(b) \not= Tr_V(b')$ for a sufficiently large $B_3$-representation $V$. In fact, Bruce Westbury discovered such $12$-dimensional representations, and asked for the minimal dimension of a $B_3$-representation able to detect a braid from its reversed braid, \cite{Westbury2} .

Imre Tuba and Hans Wenzl have given a complete classification of all simple $B_3$-representations in dimension $\leq 5$, \cite{TubaWenzl}. By inspection, one verifies that none of these representation can detect invertibility, whence the minimal dimension must be $6$. Unfortunately, no complete classification is known of simple $B_3$-representations of dimension $\geq 6$. In this note, we propose a general method to solve this and similar separation problems for three string braids.

Let $\wis{rep}_n~B_3$ be the affine variety of all $n$-dimensional representations of the three string braid group $B_3$. There is a base change action of $GL_n$ on this variety having as its orbits the isomorphism classes of $n$-dimensional representations. The GIT-quotient of this action, that is, the variety classifying closed orbits
\[
\wis{rep}_n~B_3~//~GL_n = \wis{iss}_n~B_3 \]
is the affine variety $\wis{iss}_n~B_3$ whose points correspond to the isomorphism classes of semi-simple $n$-dimensional $B_3$-representations. In general, $\wis{iss}_n~B_3$ will have several irreducible components
\[
\wis{iss}_n~B_3 = \bigcup_{\alpha} \wis{iss}_{\alpha}~B_3 \]
If we can prove that $Tr_W(b_1) = Tr_W(b_2)$ for all representations $W$ in a Zariski-dense subset $Y_{\alpha} \subset \wis{iss}_{\alpha}~B_3$, then no representation $V$ in that component will be able to separate $b_1$ from $b_2$.
In order to facilitate the calculations, we would like to parametrize the dense family $Y_{\alpha}$ by a minimal number of free parameters. That is, if $dim~\wis{iss}_{\alpha}~B_3 = d$ we would like to construct explicitly a morphism $X_{\alpha} \rTo \wis{iss}_{\alpha}~B_3$ from a rational affine variety of dimension $d$, having a Zariski dense image in $\wis{iss}_{\alpha}~B_3$.

The theory of Luna-slices in geometric invariant theory, see \cite{Luna} and \cite{MumfordFogarty}, will provide us with a supply of affine varieties $X_{\alpha}$ and specific \'etale 'action'-maps $X_{\alpha} \rTo \wis{iss}_{\alpha}~B_3$. Rationality results on quiver representations, see \cite{BessLB} and \cite{Schofield}, will then allow us to prove rationality of some specific of these varieties $X_{\alpha}$. As the modular group $\Gamma_0 = B_3 / <c>$ is a central quotient of $B_3$ it suffices to obtain these results for $\Gamma_0$. In the first two sections we will prove

\begin{theorem} The affine variety classifying $n$-dimensional semi-simple representations of the modular group decomposes into a disjoint union of irreducible components
\[
\wis{iss}_n~\Gamma_0 = \bigsqcup_{\beta}~\wis{iss}_{\beta}~\Gamma_0 \]
There exists a fixed quiver $Q$ having the following property.  For every component $\wis{iss}_{\beta}~\Gamma_0$ containing a simple representation, there is a $Q$-dimension vector $\alpha$ with rational quotient variety $\wis{iss}(Q,\alpha)$ and an \'etale action map
\[
\wis{iss}(Q,\alpha) \rTo \wis{iss}_{\beta}~\Gamma_0 \]
having a Zariski dense image in $\wis{iss}_{\beta}~\Gamma_0$.
\end{theorem}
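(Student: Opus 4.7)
The plan is to exploit the isomorphism $\Gamma_0 \cong \Z/2 \ast \Z/3$, under which an $n$-dimensional representation is a pair $(X,Y) \in GL_n \times GL_n$ with $X^2 = \mathbf{1}_n$ and $Y^3 = \mathbf{1}_n$. Both matrices are automatically semi-simple with eigenvalues in $\{\pm 1\}$ respectively $\{1,\omega,\omega^2\}$, so the underlying $n$-dimensional space carries two compatible direct-sum decompositions of shapes $(a_+,a_-)$ and $(b_1,b_\omega,b_{\omega^2})$. The tuple $\beta=(a_+,a_-;b_1,b_\omega,b_{\omega^2})$ is a discrete $GL_n$-invariant, constant on connected components; each non-empty $\beta$-stratum is irreducible, being the image under simultaneous conjugation of a product of two $GL_n$-homogeneous conjugacy classes, which yields the claimed decomposition of $\wis{iss}_n\,\Gamma_0$.

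For the fixed quiver, I would take $Q$ to be the local Luna quiver at $W_0 = \bigoplus_{(\epsilon,\zeta)} T_{(\epsilon,\zeta)}$, the $6$-dimensional semi-simple representation obtained by summing the six one-dimensional characters of $\Gamma_0$ indexed by $\epsilon \in \{\pm 1\}$ and $\zeta \in \{1,\omega,\omega^2\}$. Using the Mayer--Vietoris sequence attached to the amalgamation $\Gamma_0 = \Z/2 \ast \Z/3$ (equivalently, the fact that $\Gamma_0$ is virtually free of cohomological dimension one), one computes
\[
\dim \mathrm{Ext}^1_{\Gamma_0}(T_{(\epsilon,\zeta)},T_{(\epsilon',\zeta')}) \;=\; (1-\delta_{\epsilon,\epsilon'})(1-\delta_{\zeta,\zeta'}),
\]
so $Q$ has $6$ vertices placed on a $2\times 3$ grid and $12$ arrows, one in each direction between every pair of vertices differing in both coordinates. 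For a component $\wis{iss}_\beta\,\Gamma_0$ containing a simple, I would pick a non-negative contingency table $(m_{(\epsilon,\zeta)})$ with row sums $(a_+,a_-)$ and column sums $(b_1,b_\omega,b_{\omega^2})$, set $\alpha_{(\epsilon,\zeta)} = m_{(\epsilon,\zeta)}$, and apply Luna's slice theorem at the semi-simple representation $W = \bigoplus T_{(\epsilon,\zeta)}^{m_{(\epsilon,\zeta)}} \in \wis{iss}_\beta\,\Gamma_0$; this produces the \'etale action map $\wis{iss}(Q,\alpha) \rTo \wis{iss}_\beta\,\Gamma_0$, whose image is dense because it contains an open neighborhood of $[W]$ in the irreducible variety $\wis{iss}_\beta\,\Gamma_0$.

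Rationality of $\wis{iss}(Q,\alpha)$ for every $\alpha$ arising in this way would be deduced from the theorems of Schofield \cite{Schofield} and Bessenrodt--Le Bruyn \cite{BessLB}: the quiver $Q$ is symmetric (every arrow has a reverse), its Euler form is tractable, and a Schofield-type reflection/induction should reduce rationality to a small list of base cases. The main obstacle is exactly this last step: one must verify that every $\alpha$ coming from a component with a simple lies in the range to which those rationality theorems apply, uniformly in $\beta$, and this requires a careful combinatorial analysis of which contingency tables can support a simple $\Gamma_0$-representation.
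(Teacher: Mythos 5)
Your structural setup coincides with the paper's. The decomposition of $\wis{iss}_n~\Gamma_0$ by the eigenvalue multiplicities $\beta=(a,b;x,y,z)$, the choice of $Q$ as the local (Ext--)quiver at the direct sum of the six one-dimensional characters, and the Luna-slice construction of the \'etale action map with dense image are exactly the paper's sections 1--2; your Mayer--Vietoris computation of $\dim\mathrm{Ext}^1_{\Gamma_0}(T_{(\epsilon,\zeta)},T_{(\epsilon',\zeta')})=(1-\delta_{\epsilon,\epsilon'})(1-\delta_{\zeta,\zeta'})$ reproduces the hexagonal quiver $Q_{\mathcal{S}_1}$ that the paper obtains from the Euler form of Westbury's bipartite quiver $Q_0$, and is a clean alternative derivation. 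The genuine gap is the one you flag yourself: rationality of $\wis{iss}(Q,\alpha)$. That is the actual content of the theorem, and it cannot be settled by asserting that ``a Schofield-type reflection/induction should reduce rationality to a small list of base cases''; no such reduction is supplied, and you have also set yourself a harder target than necessary by asking for rationality of \emph{every} $\alpha$ arising from a contingency table, whereas the statement only requires \emph{one} good $\alpha$ per component.

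The paper exploits precisely that freedom. By Schofield \cite{Schofield}, for $\alpha$ a Schur root the quotient $\wis{iss}(Q_{\mathcal{S}_1},\alpha)$ is birational to the quotient of $p$-tuples of $h\times h$ matrices under simultaneous conjugation, where $h=\gcd(a_1,\dots,a_6)$; by \cite{ProcesiBook} and \cite{BessLB} this quotient is rational for $h\le 4$. The margins $\beta$ determine the contingency table $(a_1,\dots,a_6)$ only up to adding integer vectors with zero row and column sums, and the paper exhibits two explicit such modification vectors (and their cyclic permutations) which let one pass from any admissible table to one in the same component with $\gcd(a_1,\dots,a_6)=1$, while preserving non-negativity and the Schur-root condition $a_i\le a_{i-1}+a_{i+1}$ (indices mod $6$). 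This $\gcd$-reduction is the missing step in your argument: without it, the appeal to \cite{Schofield} and \cite{BessLB} does not yield rationality, since for large $\beta$ a randomly chosen contingency table can have large $\gcd$, and rationality of $h\times h$ matrix invariants is open for general $h$. A smaller omission is that not every contingency table with the correct margins is a Schur root of $Q_{\mathcal{S}_1}$, so the choice of $(m_{(\epsilon,\zeta)})$ must in any case be made with some care rather than arbitrarily.
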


We apply this general method to solve Westbury's separation problem. Of the four irreducible components of $\wis{iss}_6~B_3$ the three of dimension 6 cannot detect invertibility, whereas the component of dimension 8 can. A specific representation in that component is given by the matrices
\[ 
\sigma_1 = 
 \begin{bmatrix}
\rho+1 & \rho -1 & \rho -1 & \rho-1 & -\rho+1 & -\rho+1 \\
-2 \rho -1 & -1 & -2\rho-1 & 2\rho+1 & -2 \rho-1 & 2\rho+1 \\
\rho+2 & \rho+2 & -\rho & \rho-2 & -\rho -2 & \rho+2 \\
-\rho-2 & -3 \rho & \rho+2 & -\rho+2 & 3 \rho &-\rho-2 \\
\rho-1 & -\rho+1 & 3 \rho + 3 & -\rho+1 & 3 \rho+1 & -3\rho -3 \\
-3 & -2\rho-1 & 2 \rho+1 & 3 & 2\rho+1 & -2 \rho-3
\end{bmatrix} \]
\[
\sigma_2 =
 \begin{bmatrix}
\rho +1 & \rho -1 & \rho -1 & -\rho+1 & \rho -1 & \rho -1 \\
-2\rho -1 & -1 & -2 \rho -1 & -2 \rho -1 & 2 \rho + 1 & -2\rho -1 \\
\rho+2 & \rho + 2 & -\rho & \rho+2 & \rho+2 & -\rho-2 \\
\rho+2 & 3\rho & -\rho-2 & \rho+2 & 3 \rho & -\rho -2 \\
-\rho + 1 & \rho -1 & -3 \rho -3 & -\rho+1 & 3 \rho + 1 & -3\rho -3 \\
3 & 2 \rho+1 & -2 \rho -1 & 3 & 2 \rho + 1 & -2\rho -3 
\end{bmatrix}
\]
where $\rho$ is a primitive third root of unity. One verifies that $Tr(b)=-7128 \rho -1092$ for the braid $b$ describing knot $8_{17}$, whereas $Tr(b')=7128 \rho +6036$ for the reversed braid $b'$.

In order to facilitate the application of his method  to other separation problems of three string braids, we provide in section three explicit rational parametrizations of dense families of simple $n$-dimensional $B_3$-representations for all components and all $n \leq 11$. This can be viewed as a first step towards extending the Tuba-Wenzl classification \cite{TubaWenzl}.

\section{Luna slices for representations of the modular group}

In this section we recall the reduction, due to Bruce Westbury \cite{Westbury}, of the study of finite dimensional simple representations of the three string braid group $B_3 = \langle \sigma_1,\sigma_2~|~\sigma_1 \sigma_2 \sigma_1 = \sigma_2 \sigma_1 \sigma_2 \rangle$ to those of a particular quiver $Q_0$. We will then identify the Luna slices at specific $Q_0$-representations to representation spaces of corresponding local quivers as introduced and studied in \cite{AdriLieven}. 

Recall that the center of $B_3$ is infinite cyclic with generator $c = (\sigma_1 \sigma_2)^3 = (\sigma_1 \sigma_2 \sigma_1)^2$ and hence that the corresponding quotient group (taking $S = \sigma_1 \sigma_2 \sigma_1$ and $T= \sigma_1 \sigma_2$)
\[
B_3/ \langle c \rangle = \langle \overline{S},\overline{T}~|~\overline{S}^2=\overline{T}^3=e \rangle \simeq C_2 \ast C_3 \]
is the free product of cyclic groups of order two and three and therefore isomorphic to the modular group $\Gamma_0 = PSL_2(\Z)$. 

By Schur's lemma, $c$ acts via scalar multiplication with $\lambda \in \C^*$ on any finite dimensional irreducible $B_3$-representation, hence it suffices to study the irreducible representations of the modular group $\Gamma_0$.
Bruce Westbury \cite{Westbury} established the following connection between irreducible representations of $\Gamma_0$ and specific stable representations of the directed quiver $Q_0$ :
\[
\xymatrix@=.4cm{
& & & & \vtx{} \\
\vtx{} \ar[rrrru] \ar[rrrrd] \ar[rrrrddd] & & & & \\
& & & & \vtx{} \\
\vtx{} \ar[rrrru] \ar[rrrruuu] \ar[rrrrd] & & & & \\
& & & &  \vtx{} }
\]
For $V$ an $n$-dimensional representation of $\Gamma_0$, decompose $V$ into eigenspaces with respect to the actions of $\overline{S}$ and $\overline{T}$
\[
V_{+} \oplus V_{-} = V = V_1 \oplus V_{\rho} \oplus V_{\rho^2} \]
($\rho$ a primitive 3rd root of unity). Denote the dimensions of these eigenspaces by $a=dim(V_+), b=dim(V_-)$ resp.  $x=dim(V_1), y=dim(V_{\rho})$ and $ z = dim(V_{\rho^2})$, then clearly $a+b=n=x+y+z$. 

Choose a vector-space basis for $V$ compatible with the decomposition $V_+ \oplus V_-$ and another basis  of $V$ compatible with the decomposition $V_1 \oplus V_{\rho} \oplus V_{\rho^2}$, then the associated base-change matrix $B \in GL_n(\C)$ determines a $Q_0$-representation $V_B$ of dimension vector $\alpha = (a,b;x,y,z)$
\begin{eqnarray} \label{identification}
\xymatrix@=.4cm{
& & & & \vtx{x} \\
\vtx{a} \ar[rrrru]^(.3){B_{11}} \ar[rrrrd]^(.3){B_{21}} \ar[rrrrddd]_(.2){B_{31}} & & & & \\
& & & & \vtx{y} \\
\vtx{b} \ar[rrrruuu]_(.7){B_{12}} \ar[rrrru]_(.7){B_{22}} \ar[rrrrd]_(.7){B_{23}} & & & & \\
& & & & \vtx{z}}~\qquad \text{with} \qquad B= \begin{bmatrix} B_{11} & B_{12} \\ B_{21} & B_{22} \\ B_{31} & B_{32} \end{bmatrix}
\end{eqnarray}
A $Q_0$-representation $W$ of dimension vector $\alpha$ is said to be $\theta$-stable, resp. $\theta$-semi-stable  if for every proper sub-representations $W'$, with dimension vector $\beta = (a',b';x;,y',z')$, we have that $x'+y'+z' > a'+b'$, resp. $x'+y'+z' \geq a'+b'$. 

\begin{theorem}[Westbury, \cite{Westbury}] $V$ is an $n$-dimensional simple $\Gamma_0$-representation if and only if the corresponding $Q_0$-representation $V_B$ is $\theta$-stable. Moreover, $V \simeq W$ as $\Gamma_0$-representations if and only if corresponding $Q_0$-representations $V_B$ and $W_{B'}$ are isomorphic as quiver-representations.

The affine GIT-quotient $\wis{iss}_n~\Gamma_0 = \wis{rep}_n~\Gamma_n / GL_n$ classifying isomorphism classes of $n$-dimensional semi-simple $\Gamma_0$-representations decomposes into a disjoint union of irreducible components
\[
\wis{iss}_n~\Gamma_0 = \bigsqcup_{\alpha}~\wis{iss}_{\alpha}~\Gamma_0 \]
one component for every dimension vector $\alpha = (a,b;x,y,z)$ satisfying $a+b=n=x+y+z$. 
If $\alpha=(a,b;x,y,z)$ satisfies $x.y.z \not= 0$, then the component $\wis{iss}_{\alpha}~\Gamma_0$ contains an open subset of simple representations if and only if $max(x,y,z) \leq min(a,b)$. In this case, the dimension of $\wis{iss}_{\alpha}~\Gamma_0$ is equal to $1+n^2-(a^2+b^2+x^2+y^2+z^2)$. 
\end{theorem}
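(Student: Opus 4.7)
The plan is to push every question about $\Gamma_0$-representations through the base-change construction in~(\ref{identification}) and dispatch it as a statement about King-stability on the fixed quiver $Q_0$. Fix diagonal models $\bar S = \operatorname{diag}(I_a,-I_b)$ and $\bar T = \operatorname{diag}(I_x,\rho I_y,\rho^2 I_z)$ in their respective bases; then an $n$-dimensional $\Gamma_0$-representation of eigenspace type $\alpha = (a,b;x,y,z)$ is precisely an invertible base change $B \in GL_n(\C)$, determined up to the natural action of $GL_\alpha = GL_a \times GL_b \times GL_x \times GL_y \times GL_z$ by rebasing the individual eigenspaces. This $GL_\alpha$-action is exactly the one on the quiver representation $V_B$, so an isomorphism of $\Gamma_0$-representations is the same as an isomorphism of the corresponding $Q_0$-representations.

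For the simple--$\theta$-stable equivalence I would compare sub-objects. A sub-$\Gamma_0$-representation $U \subset V$ is stable under $\bar S$ and $\bar T$; diagonalisability yields the splittings $U = (U\cap V_+) \oplus (U \cap V_-) = (U\cap V_1) \oplus (U\cap V_\rho) \oplus (U\cap V_{\rho^2})$, producing a sub-$Q_0$-representation of $V_B$ with $a' + b' = \dim U = x' + y' + z'$. Conversely, for any sub-$Q_0$-representation $W'$ of $V_B$ the invertibility of $B$ forces $B|_{W_+\oplus W_-}$ to inject into $W_1 \oplus W_\rho \oplus W_{\rho^2}$, so $a' + b' \leq x' + y' + z'$ automatically, with equality iff $B$ restricts to an isomorphism, iff $W'$ arises from a sub-$\Gamma_0$-representation. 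Hence $\theta$-semistability is automatic on the invertible locus, and $V_B$ is $\theta$-stable iff $V$ admits no proper nonzero sub-$\Gamma_0$-representation.

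The decomposition of $\wis{iss}_n~\Gamma_0$ then follows because $\alpha$ records locally constant eigenvalue multiplicities, and each stratum $\wis{iss}_\alpha~\Gamma_0$ is irreducible as the image of the irreducible open locus $\{\det B \neq 0\} \subset \wis{rep}(Q_0,\alpha)$ under the GIT quotient by $GL_\alpha$. For the dimension formula I would compute $\dim \wis{rep}(Q_0,\alpha) = (a+b)(x+y+z) = n^2$ and observe that $GL_\alpha$ acts on the invertible locus with one-dimensional kernel (the diagonal scalars), so the quotient $\wis{iss}_\alpha~\Gamma_0$ has dimension $1 + n^2 - (a^2+b^2+x^2+y^2+z^2)$.

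The numerical criterion $\max(x,y,z) \leq \min(a,b)$ is the delicate step. Necessity is clean: if for instance $x > b$, then for any invertible $B$ the projection $V_1 \hookrightarrow V \twoheadrightarrow V_-$ has kernel $V_1 \cap V_+$ of dimension at least $x - b > 0$, and this kernel is automatically stable under both $\bar S$ and $\bar T$ (each acting by the identity on its own factor), hence a nonzero sub-$\Gamma_0$-representation, proper because $y + z > 0$ under $xyz \neq 0$; the five symmetric arguments handle the remaining inequalities. Sufficiency is the main obstacle I expect: one must exhibit, for every $\alpha$ satisfying $\max(x,y,z) \leq \min(a,b)$, a $\theta$-stable $Q_0$-representation with invertible total matrix. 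I would attack this either by invoking Schofield's recursive stability criterion to show that $\alpha$ is a $\theta$-Schur root for $Q_0$ and then combining openness of $\theta$-stability with openness of $\det B \neq 0$, or by writing down an explicit family of stable invertible base changes of type $\alpha$. Once this is in hand the rest of the theorem is bookkeeping.
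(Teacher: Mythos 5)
First, a point of reference: the paper does not prove this statement at all --- it is quoted from Westbury's preprint \cite{Westbury} --- so there is no in-paper argument to compare against, and your proposal has to be judged on its own. Most of it is sound. The dictionary between $\Gamma_0$-representations of eigenspace type $\alpha$ and $GL(\alpha)$-orbits of invertible base-change matrices $B$, the observation that $\theta$-semistability is automatic on the locus $\det B\neq 0$ while sub-$\Gamma_0$-representations correspond exactly to those sub-$Q_0$-representations on which $B$ restricts to an isomorphism (whence simple $\Leftrightarrow$ $\theta$-stable), the local constancy of $\alpha$ (read off from $Tr(\overline{S})$, $Tr(\overline{T})$, $Tr(\overline{T}^2)$) giving the disjoint decomposition into irreducible pieces, the necessity of $\max(x,y,z)\leq\min(a,b)$ via the invariant subspaces $V_{\omega}\cap V_{\pm}$, and the dimension count $n^2-(\dim GL(\alpha)-1)$ on the stable locus are all correct and are surely close to Westbury's own argument.

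The genuine gap is the one you name but do not close: sufficiency of $\max(x,y,z)\leq\min(a,b)$ for the existence of simples (openness of the simple locus is then standard, since $\theta$-stability is an open condition). Listing two possible strategies is not a proof, and neither is developed far enough to check. The most economical completion, and the one consonant with the rest of the paper, runs through Section 2: choose a nonnegative integer $2\times 3$ array with row sums $(a,b)$ and column sums $(x,y,z)$ --- such an array always exists since $a+b=x+y+z$ --- and read off multiplicities $\alpha_M=(a_1,\dots,a_6)$ of the one-dimensional simples $S_1,\dots,S_6$ so that $\beta_M=\beta$. A short computation shows that the hexagon inequalities $a_i\leq a_{i-1}+a_{i+1}$ (mod $6$) for such an array are \emph{equivalent} to $\max(x,y,z)\leq\min(a,b)$, independently of the choice of array; by \cite{LBProcesi} these inequalities are exactly the condition for the local quiver $Q_{\mathcal{S}_1}$ to admit simple representations of dimension vector $\alpha_M$, and the \'etale Luna-slice map of Theorem~\ref{general} carries those to simple $\Gamma_0$-representations in $\wis{iss}_{\beta}\,\Gamma_0$. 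Alternatively one can write down an explicit stable invertible $B$. Until one of these is carried out, the ``if'' half of the criterion --- and with it the dimension formula, which presupposes the existence of stable points --- remains unproven in your write-up.
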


The remaining simple $\Gamma_0$-representations (that is, those of dimension vector $\alpha = (a,b;x,y,z)$ with $x.y.z=0$) are of dimension one or two. There are $6$ one-dimensional simples (of the abelianization $\Gamma_{0,ab} = C_2 \times C_3$) corresponding to the $Q_0$-representations
\[
S_1 = \xymatrix@=.1cm{
& & & & \vtx{1} \\
\vtx{1} \ar[rrrru]^1  & & & & \\
& & & & \vtx{0} \\
\vtx{0}  & & & & \\
& & & & \vtx{0}} \qquad
S_2 = \xymatrix@=.1cm{
& & & & \vtx{0} \\
\vtx{0}  & & & & \\
& & & & \vtx{1} \\
\vtx{1}  \ar[rrrru]^1  & & & & \\
& & & & \vtx{0}} \qquad
S_3 = \xymatrix@=.1cm{
& & & & \vtx{0} \\
\vtx{1}  \ar[rrrrddd]^1 & & & & \\
& & & & \vtx{0} \\
\vtx{0}  & & & & \\
& & & & \vtx{1}} 
\]
\[
S_4 = \xymatrix@=.1cm{
& & & & \vtx{1} \\
\vtx{0}  & & & & \\
& & & & \vtx{0} \\
\vtx{1} \ar[rrrruuu]^1  & & & & \\
& & & & \vtx{0}} \qquad
S_5 = \xymatrix@=.1cm{
& & & & \vtx{0} \\
\vtx{1}  \ar[rrrrd]^1  & & & & \\
& & & & \vtx{1} \\
\vtx{0}  & & & & \\
& & & & \vtx{0}} \qquad
S_6 = \xymatrix@=.1cm{
& & & & \vtx{0} \\
\vtx{0}  & & & & \\
& & & & \vtx{0} \\
\vtx{1}  \ar[rrrrd]^1 & & & & \\
& & & & \vtx{1}} 
\]
and three one-parameter families of two-dimensional simple $\Gamma_0$-representations corresponding to the $Q_0$-representations
\[
T_1(\lambda) = \xymatrix@=.1cm{
& & & & \vtx{0} \\
\vtx{1}  \ar[rrrrd]|(0.6){\lambda} \ar[rrrrddd]|(0.7){1} & & & & \\
& & & & \vtx{1} \\
\vtx{1} \ar[rrrru]|(0.6){1}  \ar[rrrrd]|(0.6){1} & & & & \\
& & & &  \vtx{1} }
\quad
T_2(\lambda) = \xymatrix@=.1cm{
& & & & \vtx{1} \\
\vtx{1} \ar[rrrru]|(0.6){\lambda}  \ar[rrrrddd]|(0.6){1} & & & & \\
& & & & \vtx{0} \\
\vtx{1}  \ar[rrrruuu]|(0.6){1} \ar[rrrrd]|(0.6){1} & & & & \\
& & & &  \vtx{1} } \quad
T_3(\lambda) = \xymatrix@=.1cm{
& & & & \vtx{1} \\
\vtx{1} \ar[rrrru]|(0.6){\lambda} \ar[rrrrd]|(0.6){1}  & & & & \\
& & & & \vtx{1} \\
\vtx{1} \ar[rrrru]|(0.6){1} \ar[rrrruuu]|(0.7){1}  & & & & \\
& & & &  \vtx{0} }
\]
satisfying $\lambda \not= 1$. With $\mathcal{S}_1$ we will denote the set $\{ S_1,S_2,S_3,S_4,S_5,S_6 \}$ of all one-dimensional $\Gamma_0$-representations.

Consider a finite set $\mathcal{S}= \{ V_1,\hdots,V_k \}$ of simple $\Gamma_0$-representations and identify $V_i$ with the corresponding $Q_0$-representation of dimension vector $\alpha_i$. Consider the semi-simple $\Gamma_0$-representation
\[
M = V_1^{\oplus m_1} \oplus \hdots \oplus V_k^{\oplus m_k} \]
The theory of Luna slices allows us to describe the \'etale local structure of the component $\wis{iss}_{\beta}~\Gamma_0$, where $\beta = \sum_i m_i \alpha_i$, in a neighborhood of the point corresponding to $M$. We will assume throughout that $\beta = (a,b;x,y,z)$ is the dimension vector of a $\theta$-stable representation, that is, that $max(x,y,z) \leq min(a,b)$.

Let $\Oscr(M)$ be the $GL(\beta) = GL_a \times GL_b \times GL_x \times GL_y \times GL_z$-orbit of $M$ in the representation space $\wis{rep}(Q_0,\beta)$, then the normal space to the orbit
\[
N_M = \frac{T_M(\wis{rep}(Q_0,\beta))}{T_M(\Oscr(M))} \simeq Ext^1_{Q_0}(M,M) \]
is the extension space, see for example \cite[II.2.7]{Kraftbook}. Because
\[
Ext^1_{Q_0}(M,M) = \begin{bmatrix} M_{m_1}(Ext^1_{Q_0}(V_1,V_1)) & \hdots & M_{m_1 \times m_k}(Ext^1_{Q_0}(V_1,V_k)) \\
\vdots & & \vdots \\
M_{m_k \times m_1}(Ext^1_{Q_0}(V_k,V_1)) & \hdots & M_{m_k}(Ext^1_{Q_0}(V_k,V_k)) \end{bmatrix} \]
we can identify the vectorspace $Ext^1_{Q_0}(M,M)$ to the representation space $\wis{rep}(Q_{\mathcal{S}},\alpha_M)$ of the quiver $Q_{\mathcal{S}}$ on $k$ vertices $\{ v_1,\hdots,v_k \}$ (vertex $v_i$ corresponding to the simple $\Gamma_0$-representation $V_i$) such that the number of directed arrows from vertex $v_i$ to vertex $v_j$ is equal to
\[
\# \{ \xymatrix{\vtx{i} \ar[r] & \vtx{j}} \} = dim_{\C}~Ext^1_{Q_0}(V_i,V_j) \]
and the dimension vector $\alpha_M$ of $Q_{\mathcal{S}}$ is given by the multiplicities of the simple factors in $M$, that is, $\alpha_M = (m_1,\hdots,m_k)$. Observe that the stabilizer subgroup of $M$ is equal to $GL(\alpha_M) = GL_{m_1} \times \hdots \times GL_{m_k}$.

The quiver $Q_{\mathcal{S}}$ is called the local quiver of $M$, see for example \cite{AdriLieven} or \cite{LBbook}, and can be determined from the Euler form of $Q_0$ which is the bilinear map
\[
\chi_{Q_0}~:~\Z^5 \times \Z^5 \rTo \Z~\qquad \chi_{Q_0}(\alpha,\beta) = \alpha.M_{Q_0}.\beta^{tr} \]
determined by the matrix
\[
M_{Q_0} = \begin{bmatrix} 1 & 0 & -1 & -1 & -1 \\ 0 & 1 & -1 & -1 & -1 \\ 0 & 0 & 1 & 0 & 0 \\ 0 & 0 & 0 & 1 & 0 \\ 0 & 0 & 0 & 0 & 1 \end{bmatrix}  \]
For $V$ and $W$ $Q_0$-representation of dimension vector $\alpha$ and $\beta$, we have
\[
dim_{\C}~Hom_{Q_0}(V,W) - dim_{\C}~Ext^1_{Q_0}(V,W) = \chi_{Q_0}(\alpha,\beta) \]
Recall that $Hom_{Q_0}(V,W) = \delta_{VW} \C$ whenever $V$ and $W$ are $\theta$-stable quiver representations. From the Luna slice theorem we obtain, see for example \cite[\S 4.2]{LBbook}.

\begin{theorem} \label{general} Let $\mathcal{S} = \{ V_1,\hdots,V_k \}$ be a finite set of simple $\Gamma_0$-representations with corresponding $Q_0$-dimension vectors $\alpha_i$. Consider the semi-simple $\Gamma_0$-representation
\[
M = V_1^{\oplus m_1} \oplus \hdots \oplus V_k^{\oplus m_k} \]
with $Q_0$-dimension vector $\beta = \sum_i m_i \alpha_i$. Let $Q_{\mathcal{S}}$ be the local quiver described above and let $\alpha_M=(m_1,\hdots,m_k)$ be the $Q_{\mathcal{S}}$-dimension vector determined by the multiplicities. Then, the action map 
\[
GL(\beta) \times^{GL(\alpha_M)} \wis{rep}(Q_{\mathcal{S}},\alpha_M) \rTo \wis{rep}_{\beta}~\Gamma_0 \]
sending the class of $(g,N)$ in the associated fibre bundle to the representation $g.(M+N)$ where $M+N$ is the representation in the normal space to the orbit $\Oscr(M)$ corresponding to the $Q_{\mathcal{S}}$-representation $N$, is a $GL(\beta)$-equivariant \'etale map with a Zariski dense image. Taking $GL(\beta)$ quotients on both sides, we obtain an \'etale action map
\[
\wis{iss}(Q_{\mathcal{S}},\alpha_M) \rTo \wis{iss}_{\beta}~\Gamma_0 \]
with a Zariski dense image.
\end{theorem}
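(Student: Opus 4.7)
The plan is to apply Luna's étale slice theorem at the semi-simple point $M \in \wis{rep}(Q_0,\beta)$, and then descend to GIT quotients using Luna's fundamental lemma.

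First I verify the two hypotheses of the slice theorem. Because $M = V_1^{\oplus m_1} \oplus \cdots \oplus V_k^{\oplus m_k}$ is semi-simple as a $Q_0$-representation, its $GL(\beta)$-orbit $\Oscr(M)$ is closed in $\wis{rep}(Q_0,\beta)$. The stabilizer coincides with $\text{Aut}_{Q_0}(M)$; since each summand $V_i$ is $\theta$-stable, hence Schurian with $\text{Hom}_{Q_0}(V_i,V_j) = \delta_{ij}\C$, this stabilizer is the reductive group $GL(\alpha_M) = GL_{m_1} \times \cdots \times GL_{m_k}$.

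Next I identify the slice. The tangent space to $\wis{rep}(Q_0,\beta)$ at $M$ is the full linear representation space, and reductivity of $GL(\alpha_M)$ provides a $GL(\alpha_M)$-stable complement $N_M$ to $T_M\Oscr(M)$. The standard tangent-orbit-extension sequence identifies $N_M \simeq \text{Ext}^1_{Q_0}(M,M)$, and this vector space has already been identified above with the linear $GL(\alpha_M)$-representation $\wis{rep}(Q_{\mathcal{S}},\alpha_M)$. The affine subspace $M + N_M$ is then a Luna slice, and by the version of the slice theorem recorded in \cite[\S 4.2]{LBbook} the associated action map
\[
\Phi \colon GL(\beta) \times^{GL(\alpha_M)} \wis{rep}(Q_{\mathcal{S}},\alpha_M) \rTo \wis{rep}(Q_0,\beta), \qquad [g, N] \mapsto g \cdot (M + N)
\]
is $GL(\beta)$-equivariant and étale, with $GL(\beta)$-stable image containing $\Oscr(M)$. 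Since $\Oscr(M) \subset \wis{rep}_{\beta}~\Gamma_0$ and the image of $\Phi$ is open in this irreducible component, the image is Zariski dense there, which proves the first claim.

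For the quotient statement I invoke Luna's fundamental lemma: a $GL(\beta)$-equivariant étale map between affine $GL(\beta)$-varieties descends to an étale map on GIT quotients. Using the standard identification
\[
\big(GL(\beta) \times^{GL(\alpha_M)} \wis{rep}(Q_{\mathcal{S}},\alpha_M)\big) // GL(\beta) \;\simeq\; \wis{rep}(Q_{\mathcal{S}},\alpha_M) // GL(\alpha_M) \;=\; \wis{iss}(Q_{\mathcal{S}},\alpha_M),
\]
the induced map $\wis{iss}(Q_{\mathcal{S}},\alpha_M) \rTo \wis{iss}_{\beta}~\Gamma_0$ is étale, with Zariski dense image since density of the image passes to quotients. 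The main technical obstacle is the packaging step: verifying that the slice delivered by Luna may be taken to be the full linear $GL(\alpha_M)$-representation $\wis{rep}(Q_{\mathcal{S}},\alpha_M)$ rather than merely an invariant open neighborhood of $0$ in it; this proceeds as in \cite[II.2.7]{Kraftbook}, exploiting the linearity of the ambient representation space and the fact that étaleness is an open condition.
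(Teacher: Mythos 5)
Your overall architecture --- Luna's slice theorem at $M$, identification of the normal space with $Ext^1_{Q_0}(M,M)\simeq\wis{rep}(Q_{\mathcal{S}},\alpha_M)$, descent to quotients via Luna's fundamental lemma, and density from openness of \'etale maps plus irreducibility --- is exactly the argument the paper invokes by citing \cite[\S 4.2]{LBbook}; the paper itself supplies no further proof. However, your verification of the closed-orbit hypothesis is carried out in the wrong ambient variety, and there it is false. The quiver $Q_0$ has no oriented cycles, so its only semi-simple representations are those with all arrows zero; $M$ is a direct sum of $\theta$-stable (not simple) $Q_0$-representations, the one-parameter subgroup of $GL(\beta)$ acting with weight one on the three sink vertices degenerates $M$ to the zero representation, and hence $\Oscr(M)$ is \emph{not} closed in the linear space $\wis{rep}(Q_0,\beta)$. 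Worse, the affine quotient $\wis{rep}(Q_0,\beta)//GL(\beta)$ is a single point, so Luna's fundamental lemma applied in that ambient space cannot produce an \'etale map onto a dense subset of $\wis{iss}_{\beta}~\Gamma_0$.

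The repair is to run the argument where the theorem actually lives: in $\wis{rep}_{\beta}~\Gamma_0$, equivalently in the affine open subvariety of $\wis{rep}(Q_0,\beta)$ where the base-change matrix $B$ is invertible (the relevant $\theta$-semistable locus). There $M$, being a semi-simple $\Gamma_0$-representation, does have a closed orbit, the affine quotient is $\wis{iss}_{\beta}~\Gamma_0$, and your remaining steps --- the reductive stabilizer $GL(\alpha_M)$ via $Hom_{Q_0}(V_i,V_j)=\delta_{ij}\C$ for $\theta$-stables, the normal-space identification (which the paper carries out explicitly in the proof of Theorem \ref{matrices}), \'etaleness of the slice map as in \cite[II.2.7]{Kraftbook}, and descent --- go through verbatim. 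So the gap is a localized misidentification of the ambient affine $GL(\beta)$-variety rather than a wrong method, but as written the quotient statement does not follow.
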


\section{The action maps for $\mathcal{S}_1$ and rationality}

In order to apply theorem~\ref{general} we need to consider a family $\mathcal{S}$ of simple $\Gamma_0$-representations generating a semi-simple representation $M$ in every component $\wis{iss}_{\beta}~\Gamma_0$, and, we need to make the action map explicit, that is, we need to identify representations of the quiver $Q_{\mathcal{S}}$ with representations in the normal space to the orbit $\Oscr(M)$.

We consider the set $\mathcal{S}_1 = \{ S_1, S_2, S_3, S_4, S_5, S_6 \}$ of all one-dimensional $\Gamma_0$-representations, using the notations as before. Consider the semi-simple $\Gamma_0$-representation of dimension $n=\sum_i a_i$
\[
M = S_1^{\oplus a_1} \oplus S_2^{\oplus a_2} \oplus S_3^{\oplus a_3} \oplus S_4^{\oplus a_4} \oplus S_5^{\oplus a_5} \oplus S_6^{\oplus a_6} \]
Then $M$ corresponds to a $\theta$-semistable $Q_0$-representation of dimension vector
\[
\beta_M = (a_1+a_3+a_5,a_2+a_4+a_6,a_1+a_4,a_2+a_5,a_3+a_6) \]
Under the identification (\ref{identification}) the $n \times n$ matrix $B = (B_{ij})_{ij}$ determined by $M$ consists of the following block-matrices $B_{ij}$ containing themselves blocks of sizes $a_u \times a_v$ for the appropriate $u$ and $v$
\[
B_{11} = \begin{bmatrix} 1_{a_1} & 0 & 0 \\ 0 & 0 & 0 \end{bmatrix} \qquad
B_{21} = \begin{bmatrix} 0 & 0 & 0 \\ 0 & 0 & 1_{a_5} \end{bmatrix} \qquad
B_{31} = \begin{bmatrix} 0 & 1_{a_3} & 0 \\ 0 & 0 & 0 \end{bmatrix}
\]
\[
B_{12} = \begin{bmatrix} 0 & 0 & 0 \\ 0 & 1_{a_4} & 0 \end{bmatrix} \qquad
B_{22} = \begin{bmatrix} 1_{a_2} & 0 & 0 \\ 0 & 0 & 0 \end{bmatrix} \qquad
B_{32} =\begin{bmatrix} 0 & 0 & 0 \\ 0 & 0 & 1_{a_6} \end{bmatrix} \]

\begin{theorem} \label{matrices} With $\alpha_M = (a_1,\hdots,a_6)$, the \'etale action map 
\[
\wis{iss}(Q_{\mathcal{S}_1},\alpha_M) \rTo \wis{iss}_{\beta_M}~\Gamma_0
\]
is induced by sending a representation in $\wis{rep}(Q_{\mathcal{S}_1},\alpha_M)$ defined by the matrices
\[
\xymatrix@=1.1cm{
& \vtx{a_1} \ar@/^/[ld]^{C_{16}} \ar@/^/[rd]^{C_{12}} & \\
\vtx{a_6} \ar@/^/[ru]^{C_{61}}  \ar@/^/[d]^{C_{65}} & & \vtx{a_2} \ar@/^/[lu]^{C_{21}} \ar@/^/[d]^{C_{23}} \\
\vtx{a_5} \ar@/^/[u]^{C_{56}}  \ar@/^/[rd]^{C_{54}} & & \vtx{a_3} \ar@/^/[u]^{C_{32}} \ar@/^/[ld]^{C_{34}} \\
& \vtx{a_4} \ar@/^/[lu]^{C_{45}} \ar@/^/[ru]^{C_{43}}  & }
\]
to the representation in $\wis{rep}(Q_0,\beta_M)$ corresponding to the $n \times n$ matrix (via identification (\ref{identification}))
\[
B = \begin{bmatrix} 1_{a_1} & 0 & 0 & C_{21} & 0 & C_{61} \\
0 & C_{34} & C_{54} & 0 & 1_{a_4} & 0 \\
C_{12} & C_{32} & 0 & 1_{a_2} & 0 & 0 \\
0 & 0 & 1_{a_5} & 0 & C_{45} & C_{65} \\
0 & 1_{a_3} & 0 & C_{23} & C_{43} & 0 \\
C_{16} & 0 & C_{56} & 0 & 0 & 1_{a_6} \end{bmatrix} 
\]
Under this map, simple $Q_{\mathcal{S}_1}$-representations with invertible matrix $B$ are mapped to irreducible $n$-dimensional $\Gamma_0$-representations. 

Hence, if the coefficients in the matrices $C_{ij}$ give a parametrization of (an open set of) the quotient variety $\wis{iss}(Q_{\mathcal{S}_1},\alpha_M)$, the $n$-dimensional representations of the three string braid group $B_3$ given by
\[
\begin{cases}
\sigma_1 \mapsto \lambda B^{-1} \begin{bmatrix} 1_{a_1+a_4} & 0 & 0 \\ 0 & \rho^2 1_{a_2+a_5} & 0 \\ 0 & 0 & \rho 1_{a_3+a_6} \end{bmatrix} B \begin{bmatrix} 1_{a_1+a_3+a_5} & 0 \\ 0 & -1_{a_2+a_4+a_6} \end{bmatrix} \\
\sigma_2 \mapsto \lambda \begin{bmatrix} 1_{a_1+a_3+a_5} & 0 \\ 0 & -1_{a_2+a_4+a_6} \end{bmatrix} B^{-1}  \begin{bmatrix} 1_{a_1+a_4} & 0 & 0 \\ 0 & \rho^2 1_{a_2+a_5} & 0 \\ 0 & 0 & \rho 1_{a_3+a_6} \end{bmatrix} B
\end{cases}
\]
contain a Zariski dense set of the simple $B_3$-representations in $\wis{iss}_{\beta_M}~B_3$.
\end{theorem}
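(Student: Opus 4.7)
The plan is to make Theorem~\ref{general} fully explicit for $\mathcal{S}_1$ and then translate the resulting $\Gamma_0$-representation back into a $B_3$-representation via Westbury's identification. First, one determines the local quiver $Q_{\mathcal{S}_1}$ by computing the possible extension dimensions $\dim~\mathrm{Ext}^1_{Q_0}(S_i,S_j)$. Since each $S_i$ is $\theta$-stable, $\dim~\mathrm{Hom}_{Q_0}(S_i,S_j)=\delta_{ij}$ and hence $\dim~\mathrm{Ext}^1_{Q_0}(S_i,S_j)=\delta_{ij}-\chi_{Q_0}(\alpha_{S_i},\alpha_{S_j})$. A direct evaluation of $\chi_{Q_0}$ on the six dimension vectors $(1,0;1,0,0),(0,1;0,1,0),(1,0;0,0,1),(0,1;1,0,0),(1,0;0,1,0),(0,1;0,0,1)$ yields $\dim~\mathrm{Ext}^1=1$ precisely on the twelve ordered pairs drawn in the picture and $0$ otherwise, identifying $Q_{\mathcal{S}_1}$ with the hexagonal double-cycle.

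Second, one realizes the normal space $\mathrm{Ext}^1_{Q_0}(M,M)$ as explicit matrix perturbations of the baseline $B(M)$. Laid out with row partition $(a_1,a_4|a_2,a_5|a_3,a_6)$ and column partition $(a_1,a_3,a_5|a_2,a_4,a_6)$, the baseline $B(M)$ is the block-permutation matrix with identity blocks at the six `diagonal' slots $(a_k,a_k)$. To each arrow $v_i\to v_j$ of $Q_{\mathcal{S}_1}$ one assigns the $a_j\times a_i$ slot of $B$ located at row-block $a_j$ and column-block $a_i$; this slot is zero in the baseline and corresponds, via the usual off-diagonal realization of extensions as glueing data, to a choice of element of $\mathrm{Ext}^1_{Q_0}(S_i,S_j)$. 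Book-keeping of the tangent action of $GL(\beta_M)=GL_a\times GL_b\times GL_x\times GL_y\times GL_z$ on the six blocks $B_{ij}$ then shows that these twelve matrix-slots together span a $GL(\alpha_M)$-stable complement to $T_M\mathcal{O}(M)$ inside $T_M\wis{rep}(Q_0,\beta_M)$, and so parametrize the Luna slice. Inserting arbitrary matrices $C_{ij}$ into these slots recovers exactly the matrix $B$ in the statement.

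Third, Theorem~\ref{general} applied to this explicit slice yields the étale action map $\wis{iss}(Q_{\mathcal{S}_1},\alpha_M) \rTo \wis{iss}_{\beta_M}~\Gamma_0$ with Zariski dense image. If $N \in \wis{rep}(Q_{\mathcal{S}_1},\alpha_M)$ is simple with $GL(\alpha_M)$-stabilizer $\C^*$, étaleness propagates this to give $GL(\beta_M)$-stabilizer $\C^*$ for the corresponding $Q_0$-representation $M+N$; invertibility of $B$ then translates $M+N$ via the Westbury dictionary into an $n$-dimensional $\Gamma_0$-representation whose $GL_n$-stabilizer is also $\C^*$, hence simple by Schur. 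Finally, to lift from $\Gamma_0$ to $B_3$ one uses $\overline{S}=\sigma_1\sigma_2\sigma_1$, $\overline{T}=\sigma_1\sigma_2$, and the scalar action $\lambda^6$ of the central element $c=(\sigma_1\sigma_2)^3=(\sigma_1\sigma_2\sigma_1)^2$; with $\overline{S}=\text{diag}(I_a,-I_b)$ and $\overline{T}=B^{-1}\text{diag}(I,\rho^2 I,\rho I)B$ in the $V_+\oplus V_-$ basis, a direct computation confirms that the displayed formulas for $\sigma_1,\sigma_2$ satisfy $(\sigma_1\sigma_2)^3=(\sigma_1\sigma_2\sigma_1)^2=\lambda^6 I_n$ and the braid relation. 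The main obstacle is the second step: correctly matching the twelve arrow-slots $C_{ij}$ with a $GL(\alpha_M)$-stable complement to the orbit inside $T_M\wis{rep}(Q_0,\beta_M)$; the remaining steps reduce to Euler-form arithmetic or direct invocations of Theorem~\ref{general} and Westbury's correspondence.
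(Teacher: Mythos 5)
Your proposal is correct and follows essentially the same route as the paper: compute the local quiver from the Euler form of $Q_0$, identify the twelve off-diagonal slots of $B$ with a $GL(\alpha_M)$-stable complement to $T_M\Oscr(M)$ (the paper carries out exactly the tangent-space book-keeping you flag as the main obstacle), invoke Theorem~\ref{general}, and translate back to $B_3$ via $\sigma_1=T^{-1}S$, $\sigma_2=ST^{-1}$ with the central scalar $\lambda$. The only cosmetic discrepancy is your convention $\overline{T}=B^{-1}\mathrm{diag}(I,\rho^2I,\rho I)B$, which is the inverse of the paper's choice but leaves the displayed $\sigma_1,\sigma_2$ formulas unaffected.
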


\begin{proof}
Using the Euler-form of the quiver $Q_0$ and the $Q_0$-dimension vectors of the representations $S_i$ one verifies that the quiver $Q_{\mathcal{S}_1}$ is the one given in the statement of the theorem.
To compute the components of the tangent space in $M$ to the orbit, take $Lie(GL(\beta_M))$ as the set of matrices (in block-matrices of sizes $a_u \times a_v$)
\[
 \begin{bmatrix} A_1 & A_{13} & A_{15} \\
A_{31} & A_3 & A_{35} \\
A_{51} & A_{53} & A_5 \end{bmatrix} \oplus \begin{bmatrix} A_2 & A_{24} & A_{26} \\ A_{42} & A_4 & A_{46} \\ A_{62} & A_{64} & A_6 \end{bmatrix} \oplus \begin{bmatrix} A'_1 & A_{14} \\ A_{41} & A'_4 \end{bmatrix} \oplus \begin{bmatrix} A'_2 & A_{25} \\ A_{52} & A'_5 \end{bmatrix} \oplus
\begin{bmatrix} A'_3 & A_{36} \\ A_{63} & A'_6 \end{bmatrix} \]
and hence the tangent space to the orbit is computed using the action of $GL(\beta_M)$ on the quiver-representations, giving for example for the $B_{11}$-arrow
\[
(\begin{bmatrix} 1_{a_1} & 0 \\ 0 & 1_{a_4} \end{bmatrix} + \epsilon \begin{bmatrix} A'_1 & A_{14} \\ A_{41} & A'_4 \end{bmatrix}).\begin{bmatrix} 1_{a_1} & 0 & 0 \\ 0 & 0 & 0 \end{bmatrix}.(\begin{bmatrix}
1_{a_1} & 0 & 0 \\ 0 & 1_{a_3} & 0 \\ 0 & 0 & 1_{a_5} \end{bmatrix} - \epsilon \begin{bmatrix}
A_1 & A_{13} & A_{15} \\ A_{31} & A_3 & A_{35} \\ A_{51} & A_{53} & A_5 \end{bmatrix}) \]
which is equal to
\[
\begin{bmatrix} 1_{a_1} & 0 & 0 \\ 0 & 0 & 0 \end{bmatrix} + \epsilon \begin{bmatrix}
A_1-A'_1 & -A_{13} & -A_{15} \\ A_{41} & 0 & 0 \end{bmatrix} \]
and, similarly,  the $\epsilon$-components of $B_{21},B_{31},B_{12},B_{22}$ resp. $B_{23}$ are calculated to be
\[
B_{21}  :  \begin{bmatrix} 0 & 0 & A_{25} \\ -A_{51} & -A_{53} & A_5-A'_5 \end{bmatrix} \qquad B_{31} : \begin{bmatrix} -A_{31} & A_3-A'_3 & -A_{35} \\ 0 & A_{63} & 0 \end{bmatrix} \]
\[
B_{12} : \begin{bmatrix} 0 & A_{41} & 0 \\ -A_{42} & A_4-A'_4 & -A_{46}  \end{bmatrix} \qquad
B_{22} : \begin{bmatrix} A_2-A'_2 & -A_{24} & -A_{26} \\ A_{52} & 0 & 0 \end{bmatrix} \]
\[
B_{32} : \begin{bmatrix} 0 & 0 & A_{36} \\ -A_{62} & -A_{64} & A_6-A'_6 \end{bmatrix}
\]
Here the zero blocks correspond precisely to the matrices $C_{ij}$ describing a representation in $\wis{rep}(Q_{\mathcal{S}_1},\alpha_M)$ which can therefore be identified with the normal space in $M$ to the orbit $\Oscr(M)$.
Here we use the inproduct on $T_M~\wis{rep}_{\beta_M}~\Gamma_0 = \wis{rep}_{\beta_M}~\Gamma_0$ defined for all $B=(B_{ij})$ and $B'=(B_{ij}')$
\[
\langle B,B' \rangle = \sum_{ij} B_{ij} \overline{B}_{ij}^{tr} \]
 Hence, the representation $M + N$ is determined by the matrices
\[
B_{11} = \begin{bmatrix} 1_{a_1} & 0 & 0 \\ 0 & C_{34} & C_{54} \end{bmatrix} \qquad B_{21} =
\begin{bmatrix} C_{12} & C_{32} & 0 \\ 0 & 0 & 1_{a_5} \end{bmatrix} \]
\[
B_{31} = \begin{bmatrix} 0 & 1_{a_3} & 0 \\ C_{16} & 0 & C_{56} \end{bmatrix} \qquad
B_{12} = \begin{bmatrix}  C_{21} & 0 & C_{61} \\ 0 & 1_{a_4} & 0 \end{bmatrix}
\]
\[
B_{22} = \begin{bmatrix} 1_{a_2} & 0 & 0 \\ 0 & C_{45} & C_{65} \end{bmatrix} \qquad
B_{32} = \begin{bmatrix} C_{23} & C_{43} & 0 \\ 0 & 0 & 1_{a_6} \end{bmatrix}
\]
To any $Q_0$-representation $V_B$ of dimension vector $\alpha=(a,b;x,y,z)$, with invertible $n \times n$ matrix $B$ corresponds, via the identifications given in the previous section, the $n$-dimensional representation of the modular group $\Gamma_0$ defined by
\[
\begin{cases}
\overline{S} \mapsto \begin{bmatrix} 1_a & 0 \\ 0 & -1_b \end{bmatrix} \\
\overline{T} \mapsto B^{-1} \begin{bmatrix} 1_x & 0 & 0 \\ 0 & \rho 1_y & 0 \\ 0 & 0 & \rho^2 1_z \end{bmatrix} B
\end{cases}
\]
As $S=\sigma_1 \sigma_2 \sigma_1$ and $T=\sigma_1 \sigma_2$ it follows that $\sigma_1 = T^{-1} S$ and $\sigma_2 = S T^{-1}$. Therefore, when $V_B$ is $\theta$-stable it determines a $\C^*$-family of $n$-dimensional representations of the three string braid group $B_3$ given by
\[
\begin{cases}
\sigma_1 \mapsto \lambda B^{-1} \begin{bmatrix} 1_x & 0 & 0 \\ 0 & \rho^2 1_y & 0 \\ 0 & 0 & \rho 1_z \end{bmatrix} B \begin{bmatrix} 1_a & 0 \\ 0 & -1_b \end{bmatrix} \\
\sigma_2 \mapsto \lambda \begin{bmatrix} 1_a & 0 \\ 0 & -1_b \end{bmatrix} B^{-1}  \begin{bmatrix} 1_x & 0 & 0 \\ 0 & \rho^2 1_y & 0 \\ 0 & 0 & \rho 1_z \end{bmatrix} B
\end{cases}
\]
Using the dimension vector $\beta_M$ and the matrices $B_{ij}$ found before, we obtain the required $B_3$-representations. The final statements follow from theorem~\ref{general}.
\end{proof}

The method of proof indicates how one can make the action maps explicit for any given finite family of irreducible $\Gamma_0$-representations. Note that the condition for $\beta_M$ to be the dimension vector of a $\theta$-stable representation is equivalent to the condition on $\alpha_M$  
\[
a_i \leq a_{i-1} + a_{i+1} \qquad \text{for all $i~mod~6$} \]
which is the condition for $\alpha_M = (a_1,\hdots,a_6)$ to be the dimension vector of a simple $Q_{\mathcal{S}_1}$-representation, by \cite{LBProcesi}.

\begin{theorem} \label{rationality} For any $Q_0$-dimension vector $\beta = (a,b;x,y,z)$ admitting a $\theta$-stable representation, that is satisfying  $max(x,y,z) \leq min(a,b)$ there exist semi-simple representations $M \in \wis{iss}_{\beta}~\Gamma_0$
\[
M_{\beta} = S_1^{\oplus a_1} \oplus S_2^{\oplus a2} \oplus S_3^{\oplus a_3} \oplus S_4^{\oplus a_4} \oplus S_5^{\oplus a_5} \oplus S_6^{\oplus a_6} \]
such that for $\alpha_M = (a_1,\hdots,a_6)$ the quotient variety $\wis{iss}(Q_{\mathcal{S}_1},\alpha_M)$ is rational. As a consequence, the \'etale action map
\[
\wis{iss}(Q_{\mathcal{S}_1},\alpha_M) \rTo \wis{iss}_{\beta}~\Gamma_0 \]
given by theorem~\ref{matrices} determines a rational parametrization of a Zariski dense subset of $\wis{iss}_{\beta}~B_3$.
\end{theorem}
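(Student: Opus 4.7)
The plan splits into a combinatorial step---exhibiting multiplicities $(a_1,\ldots,a_6)$ realizing $\beta$---followed by a rationality step for the resulting local-quiver quotient. For the combinatorial step, I equate the $Q_0$-dimension vector of $M_\beta$ with $\beta=(a,b;x,y,z)$, yielding the linear system
\[
a_1+a_3+a_5=a,\quad a_2+a_4+a_6=b,\quad a_1+a_4=x,\quad a_2+a_5=y,\quad a_3+a_6=z.
\]
This system has rank five, so fixing, say, $a_1=t$ as a free parameter makes the remaining $a_i$ affine functions of $t$, and the non-negativity conditions cut out a non-empty interval of integers precisely when $\max(x,y,z)\leq \min(a,b)$. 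By the criterion from \cite{LBProcesi} recalled just before the theorem, $\alpha_M=(a_1,\ldots,a_6)$ is the dimension vector of a simple $Q_{\mathcal{S}_1}$-representation iff $a_i\leq a_{i-1}+a_{i+1}$ for all $i$ modulo six. A short case analysis on the relative magnitudes of $(a,b,x,y,z)$ shows that the feasible interval for $t$ always contains a value at which all six of these inequalities hold simultaneously.

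For the rationality step, the quiver $Q_{\mathcal{S}_1}$ is the double of an oriented hexagonal cycle, an extended Dynkin quiver of type $\tilde{A}_5$. I would invoke the rationality results of \cite{BessLB} and \cite{Schofield}, refining the choice of $t$ from the combinatorial step so that the resulting $\alpha_M$ either has a coordinate equal to $1$ (allowing a Schofield-style reduction to a smaller dimension vector) or satisfies one of the numerical criteria in \cite{BessLB} guaranteeing a fibration of $\wis{iss}(Q_{\mathcal{S}_1},\alpha_M)$ over a rational base with rational fibres. Iterating such reductions expresses the full quotient as a tower of rational fibrations over an affine space, which gives rationality.

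The theorem then follows from Theorem~\ref{matrices}: the matrices $C_{ij}$ parametrize a Zariski dense open subset of $\wis{iss}(Q_{\mathcal{S}_1},\alpha_M)$, and the \'etale action map carries this to a Zariski dense subset of $\wis{iss}_\beta~\Gamma_0$. Combined with the central scalar $\lambda\in\C^*$ encoding the generator $c$ of $B_3$, one obtains the required rational parametrization of a Zariski dense family of simple $B_3$-representations in $\wis{iss}_{\beta_M}~B_3$. The main obstacle is the rationality step: the combinatorial existence of $\alpha_M$ is a routine inequality chase, but guaranteeing genuine (rather than merely stable) rationality uniformly across all admissible $\beta$ requires exploiting the special structure of the $\tilde{A}_5$ cycle, and a handful of small-dimensional or boundary cases may have to be handled by hand.
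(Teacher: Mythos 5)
Your overall architecture (find multiplicities $(a_1,\dots,a_6)$ realizing $\beta$, then prove rationality of $\wis{iss}(Q_{\mathcal{S}_1},\alpha_M)$ via Schofield and Bessenrodt--Le Bruyn) matches the paper, but both steps contain errors or gaps. In the combinatorial step, the linear system has rank $4$, not $5$: the sum of the first two equations equals the sum of the last three (both say $a+b=x+y+z$), so the solution space is two-dimensional and fixing $a_1=t$ does \emph{not} determine the remaining $a_i$. Your ``interval of integers'' and the ensuing one-parameter case analysis therefore cannot be set up as described. This is not merely cosmetic: the two-dimensional kernel, spanned by $(2,1,-1,-2,-1,1)$ and its cyclic shifts, is exactly the freedom the paper exploits in the second step.

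The rationality step is where the genuine gap lies. Schofield's theorem, for a Schur root $\alpha_M$, makes $\wis{iss}(Q_{\mathcal{S}_1},\alpha_M)$ birational to the quotient of $p$-tuples of $h\times h$ matrices under simultaneous conjugation, where $h=\gcd(a_1,\dots,a_6)$; rationality of that quotient is known only for $h\leq 4$. So the entire content of the proof is to show that one can always choose the multiplicities, within the fixed component $\wis{iss}_{\beta}~\Gamma_0$, so that $\gcd(\alpha_M)\leq 4$ (in fact $=1$) while keeping $\alpha_M$ the dimension vector of a simple $Q_{\mathcal{S}_1}$-representation --- the paper does this by perturbing $\alpha_M$ with the kernel vectors above. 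Your proposed mechanisms do not substitute for this: a ``Schofield-style reduction to a smaller dimension vector'' is not what the birational classification theorem provides, \cite{BessLB} contains no fibration criterion of the kind you invoke (it concerns stable rationality of $PGL_n$-quotients, i.e.\ precisely the $h\times h$ matrix problem one lands on), and ``iterating such reductions'' into a tower of rational fibrations is unsupported. Without the observation that $h=\gcd(\alpha_M)$ controls everything and that $h$ can always be forced to be small, the argument does not go through --- if one were stuck with $\gcd(\alpha_M)\geq 5$, rationality would be an open problem, not a consequence of the cited results.
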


\begin{proof} 
The condition on $\beta$ ensures that there are simple $\alpha_M$-dimensional representations of $Q_{\mathcal{S}_1}$, and hence that $\alpha_M$ is a Schur root. By a result of Aidan Schofield \cite{Schofield} this implies that the quotient variety $\wis{iss}(Q_{\mathcal{S}_1},\alpha_M)$ is birational to the quotient variety of $p$-tuples of $h \times h$ matrices under simultaneous conjugation, where
\[
h = gcd(a_1,\hdots,a_l) \quad \text{and} \quad p=1-\chi_{Q_{\mathcal{S}_1}}(\frac{\alpha_M}{h},\frac{\alpha_M}{h}) = 1 + \frac{1}{h^2}(2 \sum_{i=1}^6 a_ia_{i+1} - \sum_{i=1}^6 a_i^2)  \]
By Procesi's result \cite[Prop. IV.6.4]{ProcesiBook} and the known rationality results (see for example \cite{BessLB}) the result follows when we can find such an $\alpha_M$ satisfying $gcd(a_1,\hdots,a_6) \leq 4$.
For small dimensions one verifies this by hand, and, for larger dimensions having found a representation $M$ with $gcd(a_1,\hdots,a_6) > 4$, one can modify the multiplicities of the simple components by 
\[
\xymatrix@=.6cm{
& 2  & \\
1  & & 1   \\
-1 & & -1 \\
& -2 &
}~\qquad \text{or} \qquad
\xymatrix@=.6cm{
& 3  & \\
1   & & 2   \\
-2 & & -1 \\
& -3 &
}
\]
(or a cyclic permutation of these) to obtain another semi-simple representation $M'$ lying in the same component, but having $gcd(\alpha_{M'}) =1$.
\end{proof}

\section{Rational dense families for $n < 12$}

In this section we will determine for every irreducible component $\wis{iss}_{\beta}~\Gamma_0$, with $\beta=(a,b;x,y,z)$ such that $a+b=x+y+z=n < 12$ satisfying $max(x,y,z) \leq min(a,b)$, a semi-simple $\Gamma_0$-representation
\[
M_{\beta} = S_1^{\oplus a_1} \oplus S_2^{\oplus a2} \oplus S_3^{\oplus a_3} \oplus S_4^{\oplus a_4} \oplus S_5^{\oplus a_5} \oplus S_6^{\oplus a_6} \]
contained in $\wis{iss}_{\beta}~\Gamma_0$. Then, we will explicitly determine a rational family of representations in $\wis{rep}(Q_{\mathcal{S}_1},\alpha_M)$ for $\alpha_M=(a_1,\hdots,a_6)$.

As we are only interested in the corresponding $B_3$-representations we may assume that $\beta=(a,b;x,y,z)$ is such that $a \geq b$ and $x \geq y \geq z$. Observe that in going from $\Gamma_0$-representations to $B_3$-representations we multiply with $\lambda \in \C^*$. Hence, there is a $\mu_6$-action on the dimension vectors of $Q_0$-representations giving the same component of $B_3$-representations. That is, we have to consider only one dimension vector from the $\mu_6$-orbit
\[
\begin{diagram}
(a,b;x,y,z) & \rTo & (b,a;z,x,y) & \rTo & (a,b;y,z,x)  \\
\uTo & & & & \dTo \\
(b,a;y,z,x)  & \lTo & (a,b;z,x,y) & \lTo & (b,a;x,y,z) 
\end{diagram}
\]
Hence we may assume that $a \geq b$ and that $x=max(x,y,z)$. If we find a module $M_{\beta}$ and corresponding rational parametrization for $\beta=(a,b;x,y,z)$ by representations in $\wis{rep}(Q_{\mathcal{S}_1},\alpha_M)$, then we can mirror that description to obtain similar data for $\beta'=(a,b;x,z,y)$ via
\[
\xymatrix@=1.1cm{
& \vtx{a_1} \ar@/^/[ld]^{C_{16}} \ar@/^/[rd]^{C_{12}} & \\
\vtx{a_6} \ar@/^/[ru]^{C_{61}}  \ar@/^/[d]^{C_{65}} & & \vtx{a_2} \ar@/^/[lu]^{C_{21}} \ar@/^/[d]^{C_{23}} \\
\vtx{a_5} \ar@/^/[u]^{C_{56}}  \ar@/^/[rd]^{C_{54}} & & \vtx{a_3} \ar@/^/[u]^{C_{32}} \ar@/^/[ld]^{C_{34}} \\
& \vtx{a_4} \ar@/^/[lu]^{C_{45}} \ar@/^/[ru]^{C_{43}}  & }~\quad \leftrightarrow \quad
\xymatrix@=1.1cm{
& \vtx{a_1} \ar@/^/[ld]^{C_{12}} \ar@/^/[rd]^{C_{16}} & \\
\vtx{a_2} \ar@/^/[ru]^{C_{21}}  \ar@/^/[d]^{C_{23}} & & \vtx{a_6} \ar@/^/[lu]^{C_{61}} \ar@/^/[d]^{C_{65}} \\
\vtx{a_3} \ar@/^/[u]^{C_{32}}  \ar@/^/[rd]^{C_{34}} & & \vtx{a_5} \ar@/^/[u]^{C_{56}} \ar@/^/[ld]^{C_{54}} \\
& \vtx{a_4} \ar@/^/[lu]^{C_{43}} \ar@/^/[ru]^{C_{45}}  & }
\]
proving that we may indeed restrict to $\beta=(a,b;x,y,z)$ with $a \geq b$ and $x \geq y \geq z$. 

\begin{lemma} \label{lem1} The following representations provide a rational family determining a dense subset of the quotient varieties $\wis{iss}(Q_{\mathcal{S}_1},\alpha)$ for these (minimalistic) dimension vectors $\alpha$.
\[
\begin{array}{l l}
(A) :~$\xymatrix@=.5cm{\vtx{1} \ar@/^/[rr]^1 & & \vtx{1} \ar@/^/[ll]^a}$ & 
(B) :~$\xymatrix@=.5cm{\vtx{1} \ar@/^/[rr]^{\begin{bmatrix} 0 \\ 1 \end{bmatrix}} & & \vtx{2} \ar@/^/[ll]^{\begin{bmatrix} 1 & a \end{bmatrix}} \ar@/^/[rr]^{\begin{bmatrix} b & c \end{bmatrix}} & & \vtx{1} \ar@/^/[ll]^{\begin{bmatrix} 1 \\ 0 \end{bmatrix}}}$ \\
(C) :~$\xymatrix@=.5cm{\vtx{1} \ar@/^/[rr]^{\begin{bmatrix} 0 \\ 1 \end{bmatrix}} & & \vtx{2} \ar@/^/[ll]^{\begin{bmatrix} 1 & a \end{bmatrix}} \ar@/^/[rr]^{\begin{bmatrix} 1 & 0 \\ 0 & 1 \end{bmatrix}} & & \vtx{2} \ar@/^/[ll]^{\begin{bmatrix} b & c \\ d & 1 \end{bmatrix}}}$ & 
(D) :~$\xymatrix@=.5cm{\vtx{2} \ar@/^/[rr]^{\begin{bmatrix} a & 0 \\ 0 & b \end{bmatrix}} & & \vtx{2} \ar@/^/[ll]^{\begin{bmatrix} 1 & 0 \\ 0 & 1  \end{bmatrix}} \ar@/^/[rr]^{\begin{bmatrix} 1 & 0 \\ 0 & 1 \end{bmatrix}} & & \vtx{2} \ar@/^/[ll]^{\begin{bmatrix} c & 1 \\ d & e \end{bmatrix}}}$ \\
(E) :~$\xymatrix@=.5cm{\vtx{1} \ar@/^/[rr]^{\begin{bmatrix} 1 \\ 0 \\ 0 \end{bmatrix}} & & \vtx{3} \ar@/^/[ll]^{\begin{bmatrix} a & 1 & b  \end{bmatrix}} \ar@/^/[rr]^{\begin{bmatrix} 1 & c & d \\ 0 & e & 1 \end{bmatrix}} & & \vtx{2} \ar@/^/[ll]^{\begin{bmatrix} 0 & 0 \\ 1 & 0 \\ 0 & 1 \end{bmatrix}}}$ & \\
\end{array}
\]
\end{lemma}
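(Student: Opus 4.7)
The proof is a case-by-case verification for each of the five small dimension vectors $\alpha = (a_1, \ldots, a_6)$ displayed. For each case I would carry out the following three steps.

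First, verify that $\alpha$ is a Schur root for $Q_{\mathcal{S}_1}$ via the numerical criterion $a_i \leq a_{i-1} + a_{i+1}$ from the remark after theorem~\ref{matrices}, so that simple representations exist and $\wis{iss}(Q_{\mathcal{S}_1}, \alpha)$ is an irreducible variety of dimension
\[
1 - \chi_{Q_{\mathcal{S}_1}}(\alpha, \alpha) = 1 + 2 \sum_i a_i a_{i+1} - \sum_i a_i^2.
\]
This evaluates to $1, 3, 4, 5, 5$ for (A)--(E) respectively, matching the number of free parameters visible in each proposed normal form.

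Second, explain why the displayed matrices sweep out a Zariski dense subset of the quotient. Writing the action as $C_{ij} \mapsto g_j C_{ij} g_i^{-1}$ for $(g_i) \in GL(\alpha) = \prod_i GL_{a_i}$, the recipe in each case is to use the $GL(\alpha)$-freedom first to send a pair of incoming arrows at the highest-dimensional vertex to a canonical basis-like form, thereby pinning down the base change at that vertex up to a residual torus, and then to use the remaining $GL_1$-scalings at the neighbouring vertices to fix one or two specific entries of the outgoing arrows to $1$. In case (B), for instance, the pair $(C_{12}, C_{32})$ is generically a basis of $\C^2$ and one sends it to $(e_2, e_1)$ via $g_2 \in GL_2$; this expresses $g_2$ in terms of $(g_1, g_3)$, and after quotienting by the diagonal scalar $\C^*$ the residual one-dimensional torus is used to normalize the first entry of $C_{21}$ to $1$. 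Cases (A), (C), (E) are analogous, with (E) requiring only that $C_{12}$ together with the two columns of $C_{32}$ span $\C^3$; case (D) proceeds slightly differently by using the invertibility of $C_{21}$ and $C_{23}$ to normalize them both to the identity, after which simultaneous conjugation by $g_1 = g_2 = g_3 \in GL_2$ remains and is spent to diagonalize $C_{12}$ and then to fix one off-diagonal entry of $C_{32}$ to $1$.

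Third, confirm by a short direct count that in each case the number of linear and polynomial conditions used during the normalization equals $\dim GL(\alpha) - 1$, so that the normal form cuts out a locally closed subvariety of $\wis{rep}(Q_{\mathcal{S}_1}, \alpha)$ of dimension $\dim \wis{rep}(Q_{\mathcal{S}_1}, \alpha) - \dim GL(\alpha) + 1 = \dim \wis{iss}(Q_{\mathcal{S}_1}, \alpha)$, and the induced map to the quotient is dominant of relative dimension zero.

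The main obstacle is making these base-change arguments rigorous: identifying the Zariski open subset of $\wis{rep}(Q_{\mathcal{S}_1}, \alpha)$ on which the chosen normalization actually succeeds, i.e.\ where the matrices selected for basis-normalization have full rank and the entries designated to become $1$ are indeed nonzero after the first round of conjugation, and verifying that the residual torus left after these normalizations has only a finite stabilizer on the remaining free coordinates. Since all matrices involved have size at most $3$ and the normalizations are completely explicit, these reduce to short mechanical polynomial nonvanishing checks for each of the five cases.
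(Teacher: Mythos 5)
Your proposal is correct in substance but takes a genuinely different route from the paper. The paper does not construct normal forms inside $\wis{rep}(Q_{\mathcal{S}_1},\alpha)$ at all: it argues on the invariant-theoretic side, citing Le Bruyn--Procesi for the facts that the invariant ring is generated by traces along oriented cycles and that these $\alpha$ are dimension vectors of simple representations with rational quotient varieties of transcendence degrees $1,3,4,5,5$; it then reduces cases (B), (C), (D) by composing arrows through the middle vertex (the first fundamental theorem) to pairs of $2\times 2$ matrices $(A,B)$ under simultaneous conjugation, with rank-one constraints on $A$ and/or $B$ in (B) and (C), whose generating invariants $Tr(A)$, $Det(A)$, $Tr(B)$, $Det(B)$, $Tr(AB)$ are classical, and reduces (E) to (D) by eliminating the $GL_3$-action at the $3$-vertex, again by the first fundamental theorem since $3\geq 1+2$. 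The only thing left to check is that the displayed matrices realize generic values of these finitely many generators. Your approach instead builds an explicit cross-section of the generic $GL(\alpha)$-orbits by successive base changes; this is more elementary and self-contained, but it pushes all the work into the case-by-case nonvanishing and stabilizer verifications you flag at the end, which the paper avoids by outsourcing to known generators of the invariant rings. Your route has the compensating advantage that, if the residual stabilizer is shown to be trivial rather than merely finite, it yields generic injectivity (hence birationality) of the parametrization directly, rather than only density; and your dimension counts $1-\chi_{Q_{\mathcal{S}_1}}(\alpha,\alpha)=1,3,4,5,5$ agree with the transcendence degrees quoted in the paper, so the two arguments are mutually consistent.
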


\begin{proof} Recall from \cite{LBProcesi} that the rings of polynomial quiver invariants are generated by traces along oriented cycles in the quiver. It also follows from \cite{LBProcesi} that these dimension vectors are the dimension vectors of simple representations and that their quotient varieties are rational of transcendence degrees : 1(A), 3(B), 4(C), 5(D) and 5(E). This proves (A). Cases (B),(C) and (D) are easily seen (focus on the middle vertex) to be equivalent to the problem of classifying couples of $2 \times 2$ matrices $(A,B)$ up to simultaneous conjugation in case (D). For (C) the matrix $A$ needs to have rank one and for (B) both $A$ and $B$ have rank one. It is classical that the corresponding rings of polynomial invariants are : (B)~$\C[Tr(A),Tr(B),Tr(AB)]$, (C)~$\C[Tr(A),Tr(B),Det(B),Tr(AB)]$ and (D)~$\C[Tr(A),Det(A),Tr(B),Det(B),Tr(AB)]$. In case (E), as $3 \geq 1,2$ we can invoke the first fundamental theorem for $GL_n$-invariants (see \cite[Thm. II.4.1]{Kraftbook}) to eliminate the $GL_3$-action by composing arrows through the $3$-vertex. This reduces the study to the quiver-representations below, of which the indicated representations form a rational family by case (D)
\[
\xymatrix@=.5cm{ \vtx{1} \ar@(ul,dl)_a \ar@/^/[rr]^{\begin{bmatrix} 1 \\ 0 \end{bmatrix}} & & \vtx{2} \ar@(ur,dr)^{\begin{bmatrix} c & d \\ e & 1 \end{bmatrix}} \ar@/^/[ll]^{\begin{bmatrix} 1 & b \end{bmatrix}}} \]
and one verifies that the indicated family of (E) representations does lead to these matrices.
\end{proof}

\begin{lemma} \label{lem2} Adding one vertex  at a time (until one has at most $5$ vertices) to either end of the full subquivers of $Q_{\mathcal{S}_1}$ of lemma~\ref{lem1} one can extend the dimension vector and the rational dense family of representations as follows in these allowed cases
\begin{itemize}
\item{$\xymatrix@=.5cm{\vtx{1} \ar@/^/[rr]^1 & & \vtx{1} \ar@/^/[ll]^u \ar@/^/[rr] & & \hdots \ar@/^/[ll]}$}
\item{$\xymatrix@=.5cm{\vtx{1} \ar@/^/[rr]^{\begin{bmatrix} 1 \\ u \end{bmatrix}} & & \vtx{2} \ar@/^/[ll]^{\begin{bmatrix} v & w \end{bmatrix}} \ar@/^/[rr] & & \hdots \ar@/^/[ll]}$}
\item{$\xymatrix@=.5cm{\vtx{2} \ar@/^/[rr]^{\begin{bmatrix} 1 & 0 \\ 0 & 1 \end{bmatrix}} & & \vtx{2} \ar@/^/[ll]^{\begin{bmatrix} u & v \\ w & x \end{bmatrix}} \ar@/^/[rr] & & \hdots \ar@/^/[ll]}$}
\end{itemize}
\end{lemma}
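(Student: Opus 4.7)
The plan is to prove the three extension rules by induction on the number of vertices, with Lemma~\ref{lem1} supplying the base cases. At each step a new vertex $v$ of the prescribed dimension is attached to one end of the current linear sub-quiver, connected to the previous endpoint $w$ by two arrows $C$ (out of $v$) and $D$ (into $v$). Since $v$ is terminal in the chain, every oriented cycle in $Q_{\mathcal{S}_1}$ that touches $v$ must factor through the product $DC$ at $w$. Consequently, by the first fundamental theorem for $GL_{d_v}$-invariants \cite[Thm.~II.4.1]{Kraftbook}, one may either compose the two incident arrows to eliminate the $GL_{d_v}$-factor outright, replacing $v,C,D$ by a loop $DC$ at $w$, or equivalently use the $GL_{d_v}$-action to put the pair $(C,D)$ in the canonical form shown in the three bullets.

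First I would carry out that normalization in each of the three cases. For bullet one, $C$ is normalized to the scalar $1$, leaving a single free return parameter $u$. For bullet two, $C$ is placed in the form $\begin{bmatrix} 1 \\ u \end{bmatrix}$, i.e.~an embedding perturbed by one free entry, and $D$ becomes the free row $\begin{bmatrix} v & w \end{bmatrix}$. For bullet three, $C$ is put into the $2\times 2$ identity block and $D$ becomes a fully free $2\times 2$ matrix with entries $u,v,w,x$. The crucial point for the induction is that this base change lives entirely in $GL_{d_v}$ at the freshly added vertex, hence it does not disturb the canonical forms already fixed at the earlier vertices by the inductive hypothesis.

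Next I would verify that the resulting family is Zariski dense in $\wis{iss}(Q_{\mathcal{S}_1},\alpha)$. By \cite{LBProcesi} the ring of polynomial invariants on $\wis{rep}(Q_{\mathcal{S}_1},\alpha)$ is generated by traces along oriented cycles. After the reduction of the preceding paragraph, every such cycle either lies inside the old sub-quiver (already handled by the inductive hypothesis) or crosses the new vertex $v$, in which case it factors through $DC$ at $w$, whose coefficients are precisely the free parameters just introduced. Thus the displayed affine parameter space generates the function field of $\wis{iss}(Q_{\mathcal{S}_1},\alpha)$, and its image is Zariski dense. The restriction to at most five vertices is exactly the range in which the dimension vector $\alpha$ continues to satisfy the simplicity inequality $a_i \leq a_{i-1}+a_{i+1}$ from \cite{LBProcesi}, so simple representations remain generic in the image.

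The hard part will be the bookkeeping: one has to check that in each of the three cases the proposed canonical form is truly the generic slice for the $GL_{d_v}$-action, and not a proper degeneration, and that the number of free parameters matches the transcendence degree $1-\chi_{Q_{\mathcal{S}_1}}(\alpha,\alpha)$ of $\wis{iss}(Q_{\mathcal{S}_1},\alpha)$ as computed via \cite{Schofield}. Because the three bullets exhaust the possible local structures at the added vertex and the extension is bounded by five vertices, this reduces to a finite, routine case analysis built directly on the dimension count used in Lemma~\ref{lem1}.
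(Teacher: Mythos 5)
Your strategy coincides with the paper's: add one end-vertex at a time and use the first fundamental theorem for $GL_{d_v}$-invariants to eliminate the base-change group at the new vertex by composing the two new arrows into a loop $CD$ at the old endpoint $w$. But the step you defer to ``routine bookkeeping'' --- that the displayed normal forms are genuine generic slices --- is exactly where the paper's proof has its one substantive idea, and your plan never states it: the family already constructed on the smaller subquiver is \emph{generically simple}, hence its stabilizer inside the base-change group of the old vertices is reduced to the scalars $\C^*$, and scalars act trivially by conjugation on the new loop $CD$. That is what makes the entries of $CD$ honest free parameters needing no further identification, and reduces each extension step to parametrizing, respectively, an arbitrary $1\times 1$ matrix, a rank-one $2\times 2$ matrix, or an arbitrary $2\times 2$ matrix with trivial residual action --- which the three displayed families visibly do (e.g.\ $\begin{bmatrix}1\\u\end{bmatrix}\begin{bmatrix}v&w\end{bmatrix}$ sweeps out a dense subset of the rank-one $2\times 2$ matrices). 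You only verify the easy converse direction, namely that the new $GL_{d_v}$ does not disturb the normal forms already fixed at the earlier vertices; the direction that matters is that the residual stabilizer of those earlier vertices does not identify distinct values of $CD$, and without simplicity of the inductive family this could fail.

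Two smaller inaccuracies. The bound of five vertices has nothing to do with the inequality $a_i\le a_{i-1}+a_{i+1}$ of \cite{LBProcesi} (that condition concerns dimension vectors of simples for the full cyclic quiver $Q_{\mathcal{S}_1}$); it is simply that a sixth vertex closes the hexagon and acquires arrows to both neighbours, which is the separate situation treated in lemma~\ref{lem3}. And a cycle through the new vertex does not have its trace given by the entries of $CD$ alone but by a polynomial in $CD$ and the old arrows; the clean statement is that the invariant ring of the extended quiver equals that of the old quiver with a loop adjoined at $w$, and your family dominates the product of the old quotient with the relevant space of loop values.
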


\begin{proof} Before we add an extra vertex, the family of representations is simple and hence the stabilizer subgroup at the end-vertex is reduced to $\C^*$. Adding the vertex and arrows, we can quotient out the base-change action at the new end-vertex by composing the two new arrows (the 'first fundamental theorem for $GL_n$-invariant theory, see \cite[Thm. II.4.1]{Kraftbook}). As a consequence, we have to classifying resp. a $1 \times 1$ matrix, a rank one $2 \times 2$ matrix or a $2 \times 2$ matrix with trivial action. The given representations do this.
\end{proof}

Assuming the total number of vertices is $\leq 5$ we can 'glue'  two such subquivers and families of representations at a common end-vertex having dimension $1$. Indeed, the ring of polynomial invariants of the glued quivers is the tensor product of those of the two subquivers as any oriented cycle passing through the glue-vertex can be decomposed into the product of two oriented cycles, one belonging to each component. When the two subquivers $Q$ and $Q'$ are glued along a $1$-vertex located at vertex $i$ we will denote the new subquiver $Q \bullet_i Q'$.
Remains only the problem of 'closing-the-circle' by adding the $6$-th vertex to one of these families of representations on the remaining five vertices.

\begin{lemma} \label{lem3}
Assume we have one of these dimension vectors on the full subquiver of $Q_{\mathcal{S}_1}$ on five vertices and the corresponding family of generically simple representations. Then, we can extend the dimension vector and the rational family of representations to the full quiver $Q_{\mathcal{S}_1}$
\begin{itemize}
\item{$\xymatrix@=.5cm{\hdots \ar@/^/[rr] & & \vtx{1} \ar@/^/[ll] \ar@/^/[rr]^{1} & & \vtx{1} \ar@/^/[ll]^u \ar@/^/[rr]^v & & \vtx{1} \ar@/^/[ll]^w \ar@/^/[rr] & & \hdots \ar@/^/[ll]}$}
\item{$\xymatrix@=.5cm{\hdots \ar@/^/[rr] & & \vtx{1} \ar@/^/[ll] \ar@/^/[rr]^{1} & & \vtx{1} \ar@/^/[ll]^u \ar@/^/[rr]^{\begin{bmatrix} v \\ w \end{bmatrix}} & & \vtx{2} \ar@/^/[ll]^{\begin{bmatrix} x & y \end{bmatrix}} \ar@/^/[rr] & & \hdots \ar@/^/[ll]}$}
\item{$\xymatrix@=.5cm{\hdots \ar@/^/[rr] & & \vtx{2} \ar@/^/[ll] \ar@/^/[rr]^{\begin{bmatrix} v & w \end{bmatrix}} & & \vtx{1} \ar@/^/[ll]^{\begin{bmatrix} 1 \\ u \end{bmatrix}} \ar@/^/[rr]^{\begin{bmatrix} x \\ y \end{bmatrix}} & & \vtx{2} \ar@/^/[ll]^{\begin{bmatrix} p & q \end{bmatrix}} \ar@/^/[rr] & & \hdots \ar@/^/[ll]}$}
\end{itemize}
\end{lemma}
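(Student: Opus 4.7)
The plan is to follow the template of Lemma~\ref{lem2}, treating each of the three cases by applying the first fundamental theorem for $GL_n$-invariants (\cite[Thm.~II.4.1]{Kraftbook}) at the newly added sixth vertex $v$, which has dimension~$1$ in all three displayed configurations. Let $v',v''$ be the two neighbours of $v$; closing the cycle adds four new arrows (one in each direction between $v$ and each neighbour) together with a new $GL_1 = \C^*$-factor at $v$ in the base-change group. Because $\dim v = 1$, any oriented cycle of $Q_{\mathcal{S}_1}$ passing through $v$ factors as (outgoing arrow at $v$)$\,\cdot\,$(path in the $5$-vertex chain)$\,\cdot\,$(incoming arrow at $v$), so up to invariants the role of $v$ is captured by the four composed paths together with the two short cycles $v \to v' \to v$ and $v \to v'' \to v$.

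I would then use the $\C^*$-action at $v$ to normalize a single scalar entry of one of the four new arrows to~$1$: the scalar arrow $v' \to v$ in Cases~(1) and~(2), and the top entry of the column vector $\begin{bmatrix} 1 \\ u \end{bmatrix}$ in Case~(3). The remaining entries $u,v,w,\dots$ (three parameters in Case~(1), five in Case~(2), seven in Case~(3)) become free. Combined with the rational parametrization of the $5$-vertex chain supplied by Lemmas~\ref{lem1} and~\ref{lem2}, this yields a rational family of representations of the full cyclic quiver $Q_{\mathcal{S}_1}$.

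To finish, simplicity of the generic member follows from the criterion of \cite{LBProcesi} for $Q_{\mathcal{S}_1}$: the cyclic inequalities $a_i \leq a_{i-1}+a_{i+1}$ extend from the chain to the closed $6$-cycle exactly in the three displayed configurations, which is why these are the only cases to list. The expected transcendence degree $1+2\sum_i a_i a_{i+1}-\sum_i a_i^2$ of $\wis{iss}(Q_{\mathcal{S}_1},\alpha)$ (computed from the Euler form) must then match the total count of free parameters (those from the chain plus the new ones at $v$), which is a routine verification in each case.

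The main obstacle I expect is verifying that the normalization chosen at $v$ is genuinely \emph{compatible} with the already-fixed rational parametrization of the $5$-vertex chain: one has to track the residual stabilizer of the chain family at the neighbours $v',v''$ and ensure that no invariant is silently collapsed when the new $\C^*$-action at $v$ is quotiented out. Case~(3) is the subtlest, since both neighbours have dimension~$2$ and the normalization $\begin{bmatrix} 1 \\ u \end{bmatrix}$ must be reconciled with whatever partial $GL_2$-freedom remains at $v'$ after the chain has been normalized; a direct check that the resulting rational map from the parameter space to $\wis{iss}(Q_{\mathcal{S}_1},\alpha)$ is birational onto its (dense) image is what I would use to close the argument.
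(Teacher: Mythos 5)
Your proposal follows essentially the same route as the paper: add the new $1$-dimensional vertex exactly as in Lemma~\ref{lem2} using one pair of its arrows (first fundamental theorem plus the new $\C^*$-factor to normalize a single entry), then observe that the remaining pair of arrows carries no residual group action, so all of its entries are free parameters. The ``compatibility'' obstacle you flag at the end is resolved at once by the hypothesis that the five-vertex family is generically simple --- its stabilizer is reduced to global scalars, so after spending the new $\C^*$ on the left-hand normalization the action on the right-hand arrows is trivial, which is precisely the paper's two-sentence argument.
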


\begin{proof} Forgetting the right-hand arrows, the left-hand representations are added as in the previous lemma. Then, the action on the right-hand arrows is trivial.
\end{proof}

For any of the obtained $Q_{\mathcal{S}_1}$-dimension vectors, we can now describe a rational family of generically simple representations via a code containing the following ingredients 
\begin{itemize}
\item{$A$,$B$,$C$,$D$,$E$ will be representations as in lemma~\ref{lem1}, $\overline{C}$ and $\overline{E}$ will denote the mirror images of $C$ and $E$}
\item{we add $1$'s or $2$'s when we add these dimensions to the appropriate side as in lemma~\ref{lem2}}
\item{we add $\bullet_i$ when we glue along a $1$-vertex placed at spot $i$, and}
\item{we add $1_j$ if we close-up the family at a $1$-vertex placed at spot $j$ as in lemma~\ref{lem3}}
\end{itemize}

\begin{theorem} In Figure~1 we list rational families for all irreducible components of $\wis{iss}_n~B_3$ for dimensions $n < 12$. A $+$-sign in column two indicates there are two such components, mirroring each other, $\beta_M$ indicates the $Q_0$-dimension vector and $\alpha_M$ gives the multiplicities of the $\mathcal{S}_1$-simples of a semi-simple $\Gamma_0$-representation in the component. The last column gives the dimension of this $\wis{iss}_n~B_3$-component.
\end{theorem}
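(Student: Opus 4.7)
The proof is an explicit case analysis organized by the combinatorics of the hexagonal quiver $Q_{\mathcal{S}_1}$. First I would enumerate, for each $1 \leq n \leq 11$, all $Q_0$-dimension vectors $\beta = (a,b;x,y,z)$ with $a+b = x+y+z = n$ and $\max(x,y,z) \leq \min(a,b)$, keeping only one representative per $\mu_6$-orbit as explained before Lemma~\ref{lem1}, so that $a \geq b$ and $x \geq y \geq z$. For each such $\beta$, I would produce a tuple $(a_1,\ldots,a_6) \in \mathbb{N}^6$ solving
\[
(a_1+a_3+a_5,\,a_2+a_4+a_6,\,a_1+a_4,\,a_2+a_5,\,a_3+a_6) = \beta,
\]
subject to the simplicity inequalities $a_i \leq a_{i-1}+a_{i+1}$ (indices mod $6$); by \cite{LBProcesi} this is exactly the condition for $\alpha_M = (a_1,\ldots,a_6)$ to be a Schur root of $Q_{\mathcal{S}_1}$.

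The heart of the argument is the explicit construction, for this $\alpha_M$, of a rational family of generically simple representations of $(Q_{\mathcal{S}_1},\alpha_M)$. Here I would combine Lemmas~\ref{lem1}--\ref{lem3} following the coding convention introduced just above the theorem: pick one of the base subquivers (A)--(E), extend it to at most five vertices via Lemma~\ref{lem2}, possibly glue two such pieces at a common $1$-vertex via $\bullet_i$, and close the hexagon by placing the sixth vertex at a $1$-entry using Lemma~\ref{lem3}. Each such recipe yields exactly a symbol string of the form recorded in Figure~1.

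Rationality of the resulting parametrization is then automatic. By the proof of Lemma~\ref{lem1}, the rings of polynomial invariants involved are polynomial rings whose generators are the traces along oriented cycles, and the free parameters displayed in the lemmas produce precisely those traces. Composing with the \'etale action map of Theorem~\ref{matrices} and rescaling by $\lambda \in \C^{*}$ then yields a rational dense family inside the component of $\wis{iss}_n~B_3$ corresponding to $\beta$; the formula $1 + n^2 - (a^2+b^2+x^2+y^2+z^2)$ of Theorem~\ref{general} gives the entries of the last column, and the $+$-marker appears exactly when the $\mu_6$-orbit representative has $y \neq z$, so that the mirroring operation described between Lemma~\ref{lem3} and the present theorem produces a second, distinct component.

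The main obstacle is purely combinatorial: Lemma~\ref{lem3} only applies when at least one $a_i$ at the intended closing position equals $1$, and for some $\beta$ the most natural solution of the linear system above has all $a_i \geq 2$. In these cases I would apply the shift moves from the proof of Theorem~\ref{rationality}, which modify $\alpha_M$ by a radical vector (or a cyclic permutation thereof) while preserving $\beta_M$, until a $1$-entry appears and $\gcd(a_1,\ldots,a_6) \leq 4$. A direct finite verification shows this is always achievable for $n < 12$; the fact that at $n=12$ the solution $\alpha_M=(2,2,2,2,2,2)$ of $\beta = (6,6;4,4,4)$ cannot be reduced by these moves is precisely what forces the cutoff in the statement.
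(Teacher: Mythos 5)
Your overall strategy coincides with the paper's: enumerate the admissible $\beta=(a,b;x,y,z)$ up to the $\mu_6$-action and mirroring, solve for compatible multiplicities $\alpha_M$ subject to $a_i \leq a_{i-1}+a_{i+1}$, assemble the rational families from Lemmas~\ref{lem1}--\ref{lem3} via the coding convention, and read off the dimension column and the $+$-markers as you describe. The genuine gap is in your treatment of the one entry where the lemmas do not suffice as stated, namely type $11e$ with $\beta=(7,4;4,4,3)$ and $\alpha_M=(3,2,2,1,2,1)$. The obstruction there is \emph{not} the absence of a $1$-entry in $\alpha_M$ --- there are two, at positions $4$ and $6$ --- but the position of the closing vertex: the value $3$ at vertex $1$ forces the base piece $E$ on vertices $6,1,2$, Lemma~\ref{lem2} has no move that attaches a $2$-vertex to a $1$-end, so one must glue $B$ on vertices $4,5,6$ at the $1$-vertex $6$; this consumes both $1$-vertices as interior or end vertices of the chain $4\hbox{--}5\hbox{--}6\hbox{--}1\hbox{--}2$, and the hexagon must then be closed by inserting the $2$-vertex at position $3$, a case Lemma~\ref{lem3} (whose closing vertex always has dimension $1$) does not cover. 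The paper handles this by explicitly extending Lemma~\ref{lem3} to a closing $2$-vertex, using the same trivial-residual-action argument. Your proposed fix --- applying the shift moves from the proof of Theorem~\ref{rationality} until a $1$-entry appears --- does not address this (the $1$-entries are already present), and for this $\alpha_M$ the moves either produce negative entries (e.g.\ $(5,3,1,-1,1,2)$) or vectors such as $(1,1,3,3,3,0)$ that lie outside the reach of Lemmas~\ref{lem1}--\ref{lem3} anyway.

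Your closing remark about the cutoff is also incorrect. The vector $\alpha_M=(2,2,2,2,2,2)$ for $\beta=(6,6;4,4,4)$ \emph{can} be modified by the displayed moves, e.g.\ to $(4,3,1,0,1,3)$, which still satisfies $a_i\leq a_{i-1}+a_{i+1}$ and has the same $\beta_M$; moreover $\gcd(2,\hdots,2)=2\leq 4$, so Theorem~\ref{rationality} already gives rationality of that component. The restriction to $n<12$ is not forced by an irreducibility obstruction of the kind you describe; it is simply where the explicit toolkit of Lemmas~\ref{lem1}--\ref{lem3}, which only accommodates vertex dimensions up to $3$ in specific configurations, stops producing codes without further ad hoc extensions of the sort the paper already needs for $11e$.
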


\begin{figure} \label{ratfams}
\[
\begin{array}{|c|c|l|l|l|c|}
\hline
type & & \beta_M & \alpha_M & code & dim \\
\hline
& & & \\
6a & + & (3,3;3,2,1) & (1,1,1,2,1,0) & 11B & 6 \\
6b &    & (3,3;2,2,2) & (1,1,1,1,1,1) & 1_1A111 & 8 \\
6c &    & (4,2;2,2,2) & (1,1,2,1,1,0) & 1B1 & 6 \\
\hline
& & & \\
7a & + & (4,3;3,3,1) & (2,2,1,1,1,0) & \overline{C}11 & 7 \\
7b &    & (4,3;3,2,2) & (2,1,1,1,1,1) & 1_41B1 & 9 \\
\hline
& & & \\
8a &   & (4,4;4,2,2) & (2,2,2,2,0,0) & D2 & 10 \\
8b & + & (4,4;4,3,1) & (2,2,1,2,1,0) & \overline{C} \bullet_3 B & 8 \\
8c & + & (4,4;3,3,2) & (2,2,1,1,1,1) & 1_5 1 \overline{C} 1& 12  \\
8d & + & (5,3;3,3,2) & (2,1,1,1,2,1) &  1_3 B \bullet_6 B & 10 \\
\hline
& & & \\
9a & + & (5,4;4,4,1) & (2,2,1,2,2,0) & \overline{C} \bullet_3 C & 9 \\
9b & + & (5,4;4,3,2) & (2,1,1,2,2,1) & 1_3 \overline{C} \bullet_6 B & 13  \\
9c &    & (5,4;3,3,3) & (2,2,2,1,1,1) & 1_5 1 D 1 & 15 \\
9d &    & (6,3;3,3,3) & (3,2,2,0,1,1) &  1 E 2 & 11 \\
\hline
& & & \\
10a & + & (5,5;5,4,1) & (2,2,1,3,2,0) & \overline{C} \bullet_3 \overline{F} & 10 \\
10b & + & (5,5;5,3,2) & (3,2,2,2,1,1) & 1_5 E 2 2 & 14 \\
10c & + & (5,5;4,4,2) & (2,2,1,2,2,1)& 1_3 \overline{C} \bullet_6 C  & 16 \\
10d &    & (5,5;4,3,3) & (3,2,2,1,1,1) & 1_5 E 21 & 18 \\
10e & + & (6,4;4,4,2) & (2,2,2,2,2,0) & D22 & 14 \\
10f &     & (6,4;4,3,3) & (3,2,2,1,1,1) & 1_4 1 E 2 & 16 \\
\hline
& & & \\
11a & + & (6,5;5,5,1) & (3,2,0,2,3,1) & \overline{E} \bullet_6 E & 11 \\
11b & + & (6,5;5,4,2) & (3,2,1,2,2,1) & 1_3 \overline{C} \bullet_6 E & 17  \\
11c &     & (6,5;5,3,3) & (3,2,2,2,1,1) & 1_5 E 2 2 & 19 \\
11d & + & (6,5;4,4,3) & (2,2,2,2,2,1) & 1_6 D 2 2 & 21 \\
11e & + & (7,4;4,4,3) & (3,2,2,1,2,1) & 2_3 B \bullet_6 E & 17 \\
\hline
\end{array}
\]
\caption{Rational dense families of $B_3$-representations}
\end{figure}

\begin{proof} We are looking for $Q_0$-dimension vectors $\beta=(a,b;x,y,z)$ satisfying $a+b=x+y+z=n$, $max(x,y,z) \leq min(a,b)$ and $a \geq b$, $x \geq y \geq z$ (if $y \not= z$ there is a mirror component). One verifies that one only obtains the given 5-tuples and that the indicated $Q_{\mathcal{S}_1}$-dimension vectors $\alpha_M$ are compatible with $\beta$ and that the code is allowed by the previous lemmas. Only in the final entry, type $11e$ we are forced to close-up in a 2-vertex, but by the argument given in lemma~\ref{lem3} it follows that in this case the
following representations
\[ 
\xymatrix{\hdots \ar@/^/[rr] & & \vtx{2} \ar@/^/[ll] \ar@/^/[rr]^{\begin{bmatrix} 1 & 0 \\ 0 & 1 \end{bmatrix}} & & \vtx{2} \ar@/^/[ll]^{\begin{bmatrix} u & v \\ w & x \end{bmatrix}} \ar@/^/[rr]^{\begin{bmatrix} y & z \end{bmatrix}} & & \vtx{1} \ar@/^/[ll]^{\begin{bmatrix} p \\ q \end{bmatrix}} \ar@/^/[rr] & & \hdots \ar@/^/[ll]}
\]
will extend the family to a rational dense family in $\wis{iss}(Q_{\mathcal{S}_1},\alpha_M)$.
\end{proof}

For example, the code $1_3 \overline{C} \bullet_6 C$ of component $10c$ will denote the following rational family of $Q_{\mathcal{S}_1}$-representations
\[
\xymatrix@=1.8cm{
& \vtx{2} \ar@/^/[ld]|{\begin{bmatrix} 1 & a \end{bmatrix}} \ar@/^/[rd]^{\begin{bmatrix} 1 & 0 \\ 0 & 1 \end{bmatrix}} & \\
\vtx{1} \ar@/^/[ru]^{\begin{bmatrix} 0 \\ 1 \end{bmatrix}}  \ar@/^/[d]^{\begin{bmatrix} 0 \\ 1 \end{bmatrix}} & & \vtx{2} \ar@/^/[lu]^{\begin{bmatrix} b & c \\ d & 1 \end{bmatrix}} \ar@/^/[d]^{\begin{bmatrix} i & j \end{bmatrix}} \\
\vtx{2} \ar@/^/[u]^{\begin{bmatrix} 1 & e \end{bmatrix}}  \ar@/^/[rd]^{\begin{bmatrix} 1 & 0 \\ 0 & 1 \end{bmatrix}} & & \vtx{1} \ar@/^/[u]^{\begin{bmatrix} 1 \\ k \end{bmatrix}} \ar@/^/[ld]^{\begin{bmatrix} l \\ m \end{bmatrix}} \\
& \vtx{2} \ar@/^/[lu]^{\begin{bmatrix} f & g \\ h & 1 \end{bmatrix}} \ar@/^/[ru]|{\begin{bmatrix} n & p \end{bmatrix}}  & }
\]
Corresponding to these representations are the $B$-matrices of theorem~\ref{matrices}
\[
B = \begin{bmatrix}
\begin{array}{cc|c|cc|cc|cc|c}
1 & 0 & 0 & 0 & 0 & b & c & 0 & 0 & 0 \\
0 & 1 & 0 & 0 & 0 & d & 1 & 0 & 0 & 1 \\
\hline
0 & 0 & l & 1 & 0 & 0 & 0 & 1 & 0 & 0 \\
0 & 0 & m & 0 & 1 & 0 & 0 & 0 & 1 & 0 \\
\hline
1 & 0 & 1 & 0 & 0 & 1 & 0 & 0 & 0 & 0 \\
0 & 1 & k & 0 & 0 & 0 & 1 & 0 & 0 & 0 \\
\hline
0 & 0 & 0 & 1 & 0 & 0 & 0 & f & g & 0 \\
0 & 0 & 0 & 0 & 1 & 0 & 0 & h & 1 & 1 \\
\hline
0 & 0 & 1 & 0 & 0 & i & j & n & p & 0 \\
\hline
1 & a & 0 & 1 & e & 0 & 0 & 0 & 0 & 1
\end{array}
\end{bmatrix}
\]
which give us a rational $16=15+1$-dimensional family of $B_3$-representations, Zariski dense in their component.

\section{Detecting knot invertibility}

The rational Zariski dense families of $B_3$-representations can be used to separate conjugacy classes of three string braids by taking traces. Recall that a {\em flype} is a braid of the form
\[
b = \sigma_1^u \sigma_2^v \sigma_1^w \sigma_2^{\epsilon} \]
where $u,v,w \in \Z$ and $\epsilon = \pm 1$. A flype is said to be non-degenerate if $b$ and the reversed braid $b' = \sigma_2^{\epsilon} \sigma_1^w \sigma_2^v \sigma_1^u$ are in distinct conjugacy classes.  An example of a non-degenerate flype of minimal length is $b=\sigma_1^{-1} \sigma_2^2 \sigma_1^{-1} \sigma_2$
\[
\includegraphics{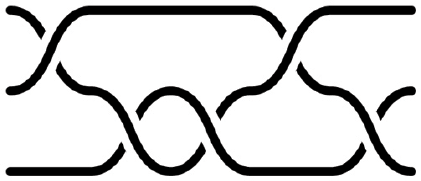} \]
Hence we can ask whether a $6$-dimensional $B_3$-representation can detect that $b$ lies in another conjugacy class than its reversed braid $b' = \sigma_2 \sigma_1^{-1} \sigma_2^2 \sigma_1^{-1}$
\[
\includegraphics{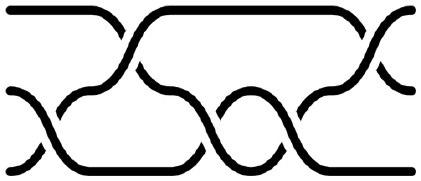} \]
Note that from the classification of all simple $B_3$-representation of dimension $\leq 5$ by Tuba and Wenzl \cite{TubaWenzl} no such representation can separate $b$ from $b'$. Let us consider $6$-dimensional $B_3$-representations. From the previous section we retain that there are $4$ irreducible components in $\wis{iss}_6~B_3$ (two being mirror images of each other) and that rational parametrizations for the corresponding $\Gamma_0$-components are given by the following representations of type 6a,6b and 6c
\[
\xymatrix@=1.1cm{
& \vtx{1}  \ar@/^/[rd]^{1} & \\
   & & \vtx{1} \ar@/^/[lu]^{a} \ar@/^/[d]^{b} \\
\vtx{1}   \ar@/^/[rd]|{{\tiny \begin{bmatrix} 0 \\ 1 \end{bmatrix}}} & & \vtx{1} \ar@/^/[u]^{1} \ar@/^/[ld]^{{\tiny \begin{bmatrix} 1 \\ 0 \end{bmatrix}}} \\
& \vtx{2} \ar@/^/[lu]^{{\tiny \begin{bmatrix} 1 & e \end{bmatrix}}} \ar@/^/[ru]|{{\tiny \begin{bmatrix} c & d \end{bmatrix}}}  & }~\qquad~\xymatrix@=1.1cm{
& \vtx{1} \ar@/^/[ld]^{g} \ar@/^/[rd]^{1} & \\
\vtx{1} \ar@/^/[ru]^{f}  \ar@/^/[d]^{e} & & \vtx{1} \ar@/^/[lu]^{a} \ar@/^/[d]^{b} \\
\vtx{1} \ar@/^/[u]^{1}  \ar@/^/[rd]^{1} & & \vtx{1} \ar@/^/[u]^{1} \ar@/^/[ld]^{1} \\
& \vtx{1} \ar@/^/[lu]^{d} \ar@/^/[ru]^{c}  & }~\qquad~
\xymatrix@=1.1cm{
& \vtx{1}  \ar@/^/[rd]^{1} & \\
    & & \vtx{1} \ar@/^/[lu]^{a} \ar@/^/[d]^{{\tiny \begin{bmatrix} 1 \\ 0 \end{bmatrix}}} \\
\vtx{1}   \ar@/^/[rd]^{1} & & \vtx{2} \ar@/^/[u]^{{\tiny \begin{bmatrix} c & d \end{bmatrix}}} \ar@/^/[ld]^{{\tiny \begin{bmatrix} 1 & e \end{bmatrix}}} \\
& \vtx{1} \ar@/^/[lu]^{b} \ar@/^/[ru]|{{\tiny \begin{bmatrix} 0 \\ 1 \end{bmatrix}}}  & }\]
As a consequence we obtain a rational Zariski dense subset of $\wis{iss}_6~B_3$ from theorem~\ref{matrices} using the following matrices $B$ for the three components
\[
\begin{bmatrix} 1 & 0 & 0 & a & 0 & 0 \\
0 & 1 & 0 & 0 & 1 & 0 \\
0 & 0 & 1 & 0 & 0 & 1 \\
1 & 1 & 0 & 1 & 0 & 0 \\
0 & 0 & 1 & 0 & 1 & e \\
0 & 1 & 0 & b & c & d \end{bmatrix}~\qquad~
\begin{bmatrix} 1 & 0 & 0 & a & 0 & f \\
0 & 1 & 1 & 0 & 1 & 0 \\
1 & 1 & 0 & 1 & 0 & 0 \\
0 & 0 & 1 & 0 & d & e \\
0 & 1 & 0 & b & c & 0 \\
g & 0 & 1 & 0 & 0 & 1 \end{bmatrix}~\qquad~\begin{bmatrix} 1 & 0 & 0 & 0 & a & 0 \\
0 & 1 & e & 1 & 0 & 1 \\
1 & c & d & 0 & 1 & 0 \\
0 & 0 & 0 & 1 & 0 & b \\
0 & 1 & 0 & 0 & 1 & 0 \\
0 & 0 & 1 & 0 & 0 & 1 \end{bmatrix}
\]
Using a computer algebra system, for example SAGE \cite{Sage},  one verifies that $Tr(b)=Tr(b')$ for all $6$-dimensional $B_3$-representations belonging to components $6a$ and $6c$. However, $Tr(b) \not= Tr(b')$ for an open subset of representations in component $6b$ and hence $6$-dimensional $B_3$-representations can detect non-degeneracy of flypes. The specific representation given in the introduction (obtained by specializing $a=c=e=g=1$ and $b=d=f=\lambda=-1$ gives $Tr(b)=648 \rho -228$ whereas $Tr(b')=-648 \rho -876$.
In order to use the family of $6$-dimensional $B_3$-representations as an efficient test to separate 3-braids from their reversed braids it is best to specialize the variables to random integers in $\Z[\rho]$ to obtain an $8$-parameter family of $B_3$-representations over $\mathbb{Q}(\rho)$. In SAGE \cite{Sage} one can do this as follows

\begin{verbatim}

K.<l>=NumberField(x^2+x+1);
a=randint(1,1000)*l+randint(1,1000);
b=randint(1,1000)*l+randint(1,1000);
c=randint(1,1000)*l+randint(1,1000);
d=randint(1,1000)*l+randint(1,1000);
e=randint(1,1000)*l+randint(1,1000);
f=randint(1,1000)*l+randint(1,1000);
g=randint(1,1000)*l+randint(1,1000);
h=randint(1,1000)*l+randint(1,1000);
B=matrix(K,[[1,0,0,a,0,f],[0,1,1,0,1,0],[1,1,0,1,0,0],[0,0,1,0,d,e],
[0,1,0,b,c,0],[g,0,1,0,0,1]]);
Binv=B.inverse();
mat2=matrix(K,[[1,0,0,0,0,0],[0,1,0,0,0,0],[0,0,1,0,0,0],[0,0,0,-1,0,0],
[0,0,0,0,-1,0],[0,0,0,0,0,-1]]);
mat3=matrix(K,[[1,0,0,0,0,0],[0,1,0,0,0,0],[0,0,l^2,0,0,0],
[0,0,0,l^2,0,0],[0,0,0,0,l,0],[0,0,0,0,0,l]]);
s1=h*Binv*mat3*B*mat2;
s2=h*mat2*Binv*mat3*B;
s1inv=s1.inverse();
s2inv=s2.inverse();

\end{verbatim}

One can then test the ability of this family to separate braids from their reversed braid on the list of all knots having at most $8$ crossings and being closures of three string braids, as provided by the Knot Atlas \cite{KnotAtlas}.  
It turns out that the braids of the following knots can be separated from their reversed braids : $6_3$, $7_5$, $8_7$, $8_9$, $8_{10}$ (all flypes), as well as the smallest non-invertible knot $8_{17}$ as mentioned in the introduction.

\end{document}